\newtheorem{theorem}{Theorem}[section]
\newtheorem{lemma}{Lemma}[section]  
\newtheorem{remark}{Remark}[section]
\newtheorem{proposition}{Proposition}[section]
\newtheorem{example}{Example}[section]
\newtheorem{definition}{Definition}[section]
\newcommand{\Z}{{\mathbb{Z}}}
\newcommand{\C}{{\mathbb{C}}}
\newcommand{\R}{{\mathbb{R}}}
\newcommand{\Q}{{\mathbb{Q}}}
\newcommand{\N}{{\mathbb{N}}}
\newcommand{\KK}{{\mathbb{K}}}
\newcommand{\pp}{{\mathbb{P}}}
\newcommand{\G}{{\mathcal{G}}}
\newcommand{\MS}{{\mathcal{S}}}
\newcommand{\A}{{\mathcal{A}}}
\newcommand{\Pc}{{\mathcal{P}}}
\newcommand{\Nc}{{\mathcal{N}}}
\newcommand{\Ec}{{\mathcal{E}}}
\newcommand{\Kc}{{\mathcal{K}}}
\newcommand{\Sc}{{\mathcal{S}}}
\newcommand{\Cc}{{\mathcal{C}}}
\newcommand{\Bc}{{\mathcal{B}}}
\newcommand{\Fc}{{\mathcal{F}}}
\newcommand{\Tc}{{\mathcal{T}}}
\newcommand{\Dc}{{\mathcal{D}}}
\newcommand{\emme}{{\mathcal M}}
\newcommand{\upi}{\pi}
\newcommand{\ot}{\leftarrow}
\newcommand{\cal}{\mathcal}
\title{On models of the braid arrangement and their hidden symmetries}
\author{Filippo Callegaro, Giovanni Gaiffi}
\date{\today}
\begin{document}

\begin{abstract}
The De Concini-Procesi wonderful  models  of the braid arrangement of type  \(A_{n-1}\)  are equipped with a natural \(S_n\) action, but only the minimal model admits an `hidden'  symmetry, i.e. an action of \(S_{n+1}\) that  comes from its  moduli space interpretation.   
In this paper we explain why    the non minimal models don't admit this extended action:  they  are    `too small'.  In particular we construct  a {\em supermaximal}  model which is  the smallest model that  can be  projected  onto the maximal model and  again admits  an    extended \(S_{n+1}\) action. 
We give an explicit description of a basis for the integer cohomology of this supermaximal model.

Furthermore, we deal with another    hidden extended action of the symmetric group: we  observe that the symmetric group \(S_{n+k}\) acts by permutation on  the set of \(k\)-codimensionl strata of the minimal model.  Even if this  happens at a purely combinatorial level,  it gives rise to an interesting permutation action on the  elements of a basis of the integer cohomology. 
\end{abstract}
\maketitle

\section{Introduction}
\label{sec:1}

In this paper we focus on two different `hidden' extended actions of the symmetric group on  wonderful models of the (real or complexified) braid arrangement.  
As it is well known, there  are several  De Concini-Procesi  models associated with  the  arrangement of type \(A_{n-1}\) (see \cite{DCP1}, \cite{DCP2}); these are smooth varieties, proper over the complement of the arrangement,  in which the union of the subspaces is replaced by a divisor with normal crossings.  Among these spaces there is a minimal one (i.e. there are birational projections from the  other spaces onto it), and a maximal one (i.e. there are birational projections from it onto the other spaces).   
The natural \(S_n\) action on the complement of the  arrangement  of type \(A_{n-1}\) 
extends to all of these models. 

The first of the two extended actions which we deal with is well known and comes from the following remark:  the minimal projective (real or complex) De Concini-Procesi model  of type \(A_{n-1}\) is isomorphic to the moduli space \({\overline M_{0,n+1}}\) of \(n+1\)-pointed stable  curves of genus 0, therefore it carries an `hidden' extended  action of \(S_{n+1}\) that has been  studied by several authors (see for instance \cite{getzler},  \cite{rains2009}, \cite{etihenkamrai}).

Now we  observe  that the  \(S_{n+1}\) action cannot be extended to the non-minimal  models (we show this by an example in Section \ref{sec:supermaximal}).   

Why does this happen?  This is  the first problem  discussed in the present  paper. 
We answer to this question  by showing  in Section \ref{sec:chiusuraazione}   that the maximal model is, in a sense, `too small'. This takes two steps (see Theorem \ref{teo:chiusoazionebuilding}):
\begin{enumerate}
\item  we identify in a natural way its strata with a subset \(\mathcal {T}\)  of 1-codimensional  strata of  a `supermaximal' model on which  the \(S_{n+1}\) action is defined. This  supermaximal model is obtained by blowing up some strata in the maximal model, but it also  belongs to the family \(\mathcal{L}\) of models obtained by blowing up {\em building sets} of strata in the minimal model;   in fact it is the model obtained by blowing up all the strata of the minimal model.  The models in \(\mathcal{L}\) are examples  of some well known  constructions that, starting from a `good' stratified variety, produce models by blowing up a suitable subset of  strata (see \cite{procesimacpherson},  \cite{li}, \cite{gaimrn0} and also \cite{denham} for further references);

\item we show that the closure of \(\mathcal {T}\)  under the \(S_{n+1}\) action is the  set of all the strata of the supermaximal model. More precisely, this  means that   the supermaximal model is the minimal model in \(\mathcal{L}\)  that admits a birational projection onto  the maximal model  and is equipped with  the  \(S_{n+1}\) action.

\end{enumerate}

The second problem addressed by this paper is the computation of the integer cohomology  of the  complex supermaximal modes described above. The cohomology module   provides `geometric' extended representations of \(S_{n+1}\) and in Theorem  \ref{thm:cohomology} we exhibit an explicit  basis for it.    Actually, the statement of Theorem  \ref{thm:cohomology} is much more general: given any complex subspace arrangement we consider its minimal De Concini-Procesi model and  we describe a basis for the integer cohomology of the variety obtained by blowing up all the strata in  this minimal model (this variety generalizes in a way the notion of  supermaximal model).

We then compute a generating formula for the  Poincar\'e polynomials of the complex supermaximal models of braid arrangements (see Theorems \ref{teo:sostituzioni}, \ref{teo:formulapoincare}). As a consequence, we  also give a formula for the  Euler characteristic series in the real case, where Euler secant numbers appear (see Corollary \ref{teo:eulerchar}).

In the last two sections, Section \ref{sec:combinatorialaction} and Section \ref{ss:action3},  we show that there is   another    hidden extended action of the symmetric group on the minimal model  of a  braid arrangement,  that is  different from the action  described above. 

In fact, motivated by a combinatorial remark proven in \cite{gaifficayley}, we  observe that the symmetric group \(S_{n+k}\) acts by permutation on  the set of \(k\)-codimensionl strata of the minimal model of  type \(A_{n-1}\). 

This happens at a purely combinatorial level  and it does not correspond to a geometric  action on the minimal model, nevertheless it gives rise to an interesting permutation action on the  elements of a basis of the integer cohomology of the complex minimal model. 
The splitting of these elements  into orbits
allows us  to write (see Theorem \ref{teo:formulapoicareminimale})  a generating formula for  the Poincar\'e polynomials of the complex minimal models  that is different from the ones available in the literature (see for instance the recursive formula for the Poincar\'e series computed, in three different ways, in  \cite{Manin}, \cite{YuzBasi}, \cite{GaiffiBlowups}).


\section{Wonderful Models}\label{wonderful}
\subsection{The Geometric Definition of Building Sets and Nested Sets}
    \label{some}
     In this section we recall from \cite{DCP1}, \cite{DCP2} 
     the  definitions of building set and nested set of subspaces. 
    Let $V$ be a real or complex finite dimensional vector space 
    endowed with an Euclidean or Hermitian non-degenerate product
    and let  $\A$ be a central subspace   arrangement in  $V$.   
    For every $A\in V$, we will  denote by  $A^\perp$ its
orthogonal. 
We denote   by ${\mathcal C}^{}_{\A}$ the closure  under the sum of $\A^{}$ and 
by $\A^{\perp}$ the arrangement of subspaces in \(V\)
\[\A^\perp=\{A^\perp \:|\: A\in \A\}.\]

\begin{definition}
The  collection of subspaces $\G \subset {\mathcal C}^{}_{\A}$ is called  {\em building set associated to \(\A\)} if \({\mathcal C}^{}_{\A}= {\mathcal C}^{}_{\G}\) and  every element $C$ of ${\mathcal C}^{}_{\A}$ is the direct sum 
 $C= G_1\oplus G_2\oplus \ldots\oplus G_k$ of  the maximal elements 
$G_1,G_2,\ldots,G_k$  of
$\G^{}$ contained in $C$ (this is called the \(\G\)-decomposition of \(C\)).
\end{definition}

Given a subspace  arrangement  \(\A\), there are several building sets associated to it. Among these there always are a maximum and a minimum (with respect to inclusion). The maximum is ${\mathcal C}^{}_{\A}$, the minimum is the building set of {\em irreducibles} that is defined as follows.

\begin{definition}
 Given a subspace $U\in\Cc_\A$, a \textbf{decomposition of} $U$ in $\mathbf{\Cc_\A}$ is a collection
$\{U_1,\ldots,U_k\}$ ($k>1$) of non zero subspaces in $\Cc_\A$ such that
\begin{enumerate}
 \item $U=U_1\oplus\cdots\oplus U_k;$
 \item for every subspace $A\in\Cc_\A$ such that $A\subset U$, we have $A\cap U_1,\ldots,A\cap U_k \in \Cc_\A$ and
$A=\left(A\cap U_1\right)\oplus\cdots\oplus \left(A\cap U_k\right)$.
\end{enumerate}
\end{definition}
\begin{definition}
 A nonzero subspace $F\in\Cc_\A$ which does not admit a decomposition  is called \textbf{irreducible} and the set of
irreducible subspaces is denoted by $\mathbf{\Fc_\A}$.
\end{definition}
\begin{remark}
\label{rootirreducibles}
As an example, let us consider  a root system \(\Phi\) in \(V\) (real or complexified vector space) and its associated 
 root  arrangement (i.e.  $\A^\perp$ is the hyperplane arrangement provided by the hyperplanes orthogonal to   the roots in  \(\Phi\)). In this case  the  building set of irreducibles is the set  whose elements are  the  subspaces spanned by  the irreducible root subsystems of \(\Phi\) (see \cite{YuzBasi}).

\end{remark}



 \begin{definition}
Let $\G$ be a building set associated to \(\A\). 
A subset $\MS\subset \G$ is called 
($\G$-)nested, if
 given any subset $\{U_1,\ldots ,U_h \}\subseteq \MS$ (with \(h>1\)) of pairwise non comparable 
	elements, we have that $U_1 + \cdots + U_h\notin \G$.
\end{definition}

\subsection{The Example of the Root System \(A_{n-1}\)}
\label{secexamplean}

 Let \(V=\R^n\) or \(\C^n\) and let us consider  the real or complexified root arrangement of type \(A_{n-1}\).  We think of it  as an {\em essential} arrangement, i.e. we consider the hyperplanes defined by the equations \(x_i-x_j=0\) in the quotient space \(V/<(1,1,...,1)>\).

Let us denote by  $\Fc_{A_{n-1}}$ the building set of irreducibles associated to this arrangement. According to Remark \ref{rootirreducibles}, it is made  by all the subspaces in \(V\) spanned by the irreducible root subsystems.  Therefore there  is a bijective correspondence between the elements of $\Fc_{A_{n-1}}$ and the subsets of $\{1,\cdots,n\}$ of cardinality at
least two:  if the orthogonal of \(A\in \Fc_{A_{n-1}}\) is the subspace  described by the equation  \(x_{i_1}=x_{i_2}= \cdots =x_{i_k}\) then we represent \(A\) by the set \(\{i_1,i_2,\ldots, i_k\}\).
As a consequence,   a $\Fc_{A_{n-1}}$-nested set  \(\Sc\) is  represented by a set  (which we still call \(\Sc\)) of subsets of $\{1,\cdots,n\}$ with the property that any of its elements has  cardinality \(\geq 2\) and if \(I\) and \(J\) belong to \(\Sc\) than either \(I\cap J=\emptyset\) or one of the two sets is included into the other.

As an example we can consider  the  $\Fc_{{A_{8}}}$-nested set represented  by the  three sets \(\{1,5\}\), \(\{2,4,6\}\), \(\{2,4,6,7,8\}\). This means that the elements of the nested set are the  three subspaces whose orthogonal subspaces in  \(V/<(1,1,...,1)>\) are described respectively by the equations \(x_{1}=x_{5}\), \(x_{2}=x_{4} =x_{6}\) and 
\(x_{2}=x_{4}= x_{6}=x_{7}=x_{8}\).

We observe that we  can  represent a $\Fc_{A_{n-1}}$-nested set   \(\MS\) 
 by an oriented   forest on \(n\) leaves  in the following way. We consider the set \({\tilde \MS}=\MS\cup\{1\}\cup \{2\}\cup\cdots \cup \{n\}\).
Then the forest  coincides  with the Hasse diagram of  \({\tilde \MS}\) viewed as a poset by the  inclusion relation:  the roots of the trees correspond to  the maximal elements of \(\MS\), and the orientation goes from the roots to the leaves, that are the vertices \(\{1\},\{2\},\ldots, \{n\} \).

Let us now focus on the maximal building set \(\Cc_{A_{n-1}}\) associated with the root arrangement of type \(A_{n-1}\). It is made by all the subspaces that can be obtained as the span of a set of roots. Using the same notation as before, these subspaces can be put in bijective correspondence with the partitions of $\{1,\cdots,n\}$ such that at least one part has cardinality \(\geq 2\). 

For instance, 
\[   \{1,4\},\{2\},\{3,5,9\},\{6\},\{7,8\}\]
corresponds to the subspace whose orthogonal is described by the equations \(x_{1}=x_{4}\) and  \(x_{3}=x_{5} =x_{9}\) and 
\(x_{7}=x_{8}\).



The \(\Cc_{A_{n-1}}\)-nested sets  are given by chains of subspaces in \(\Cc_{A_{n-1}}\) (with respect to   inclusion). In terms of partitions, this corresponds to give chains of the above described partitions of $\{1,\cdots,n\}$ (with respect to the refinement relation).


One can find in   \cite{GaiffiServenti} a description of the maximal model \(Y_{\Cc_{{A_{n-1}}}}\) and in \cite{GaiffiServenti2} a description of all the \(S_n\)-invariant building sets associated with the root system \(A_{n-1}\).

\subsection{The construction of wonderful models and their cohomology}
  \label{constru}
In this section we  recall  from   \cite{DCP1} the  construction and the main properties of the De Concini-Procesi models.

 The  interest in these models  was at first motivated by an  approach to Drinfeld's construction of special
solutions for Khniznik-Zamolodchikov equation (see \cite{drinfeld}).  Moreover, in \cite{DCP1} it was shown, using the cohomology description of these models, that  the rational homotopy type of the complement of a complex  subspace arrangement  depends only on the intersection lattice.

 Then real and complex  De Concini-Procesi models
turned out to play a key role in several   fields of mathematical research:  subspace and toric  arrangements,  toric varieties (see for instance \cite{DCP3}, \cite{feichtneryuz}, \cite{rains}),   tropical geometry (see \cite{feichtnersturmfels}), 
moduli spaces  and       configuration spaces (see for instance \cite{etihenkamrai}, \cite{LTV}), box splines, vector partition functions  and  index theory (see  \cite{DCP4}, \cite{cavazzanimoci}),  discrete  geometry    (see \cite{feichtner}).

 Let us recall how they are defined. We will  focus on the case when \(\KK=\R\) or \(\KK=\C\).
 Let \(\A\) be a subspace arrangement in the real or complex space  \(V\) 
 endowed with a non-degenerate euclidean or Hermitian product
  and let \(\emme(\A^\perp)\) be the  complement in \(V\) of the arrangement \(\A^\perp\).
  Let $\G$ be a building set associated to \(\A\) (we can suppose that it contains $V$). 
 Then one considers   the map
\begin{displaymath}
	 i\: :\: \pp( \emme (\A^\perp)  ) \rightarrow  \pp(V) \times \prod 
_{D\in
	\G-\{V\}}^{}\pp(V /D^\perp)
\end{displaymath}
where in the first coordinate we have the inclusion and the map from
$\emme (\A^\perp) $ to $\pp(V /D^\perp )$
 is the restriction of the
canonical projection ${\displaystyle (V-D^\perp) \rightarrow 
\pp(V /D^\perp)}$.
\begin{definition}
The  (compact) wonderful  model
$ Y_{\G}$ is  obtained
by  taking the closure of the image of $ i$.
\end{definition}
De Concini and Procesi in \cite{DCP1} proved  that
  the complement $ { \mathcal D}$ of  $\pp( \emme (\A^\perp) ) $ in $
Y_{\G}$ is a divisor with normal crossings whose irreducible components are in bijective correspondence with the elements of \(\G-\{V\}\) and are denoted by \(\Dc_G\)  
($G\in
	\G-\{V\}$). 

	



If we denote by $\upi$ the 
projection of $
Y_{\G}$ onto
	the first component \(  \pp(V)\),  then ${ \mathcal  D}_G$		can  be characterized as the unique 
irreducible component such that
	$ \upi({ \mathcal  D}_G)=\pp(G^\perp)$.  
	
	A complete characterization of the boundary is then provided by the 
		observation that, if we consider  a collection \( 
	{ \mathcal T} \) of subspaces in \( \G \) containing \(V\),  then  \[ 
	{ \mathcal D}_{{ \mathcal T}}= \bigcap_{A\in { \mathcal 
	T}-\{V\}}{\mathcal D}_{A}\] is non empty if and only if \( {\mathcal T} \) 
	is \(\G\)-nested, and in this case \(	{\mathcal D}_{{\mathcal T}}\)  is a smooth irreducible subvariety obtained as a normal crossing intersection.
Sometimes we will denote $Y_{\G}$ by  
 ${ \mathcal D}_\MS$ with $\MS=\{V\}$.


The integer cohomology ring of the models \(Y_\G\)  in the complex case was studied  in  \cite{DCP1}, where  a  presentation by generators and relations  was provided. The cohomology is   torsion free, and in  \cite{YuzBasi}  Yuzvinski explicitly described  some $\Z$-bases (see also  \cite{GaiffiBlowups}).  We  briefly  recall these results (for a description of the cohomology ring in the real case see \cite{rains}).

Let \(\G\) be a building set containing \(V\) and let us consider a $\G$-nested set $\MS$. 
We  take a 
subset ${\cal H}\subset \G$  and an  element $B \in \G$ 
with the property that $A\subsetneq B$ for all $A\in {\cal H}$ 
and  we put  $\MS_B=\{A\in \MS\: :\: 
A\subsetneq B\}$. As in \cite{DCP1}, we define the non negative integer 
$	d_{{\cal H},B}^\MS$:
\begin{definition}
\begin{displaymath}
	d_{{\cal H},B}^\MS=dim B-dim 
	\left( \sum _{A\in {\cal H}\cup \MS_B}^{}A \right)
\end{displaymath}
\end{definition}
Then we consider the polynomial ring $\Z[c_A]$ where the variables 
$c_A$ are  indexed by the elements of $\G$.  
\begin{definition} Given \(\G\), $\MS$, ${\cal H}$  and  $B$ as before, we define the following polynomial in $\Z[c_A]$: 
\begin{displaymath}
	P_{{\cal H},B}^\MS=\left(\prod _{A\in {\cal H}}^{}c_A \right) \left(\sum _{B\subset 
	C}^{}c_C \right)^{d_{{\cal H},B}^\MS}
\end{displaymath}
\end{definition}
Let us now call by $I_\MS$  the ideal in $\Z[c_A]$ generated by these 
polynomials, for fixed $\MS$ and varying ${\cal H}, B$.
\begin{theorem}[{see \cite[Section 5.2]{DCP1}}]
Let $\G$  and   $\MS$ be as before, and let us consider the complex model \(Y_\G\). The natural map $\phi \: : \Z[c_A] \mapsto H^{\ast}(\Dc_\MS,\Z)$, defined by sending 
$c_A$ to the cohomology class 
$[\Dc_A]$ associated to the divisor $\Dc_A$ (restricted to $\Dc_S$), induces an 
isomorphism  between ${\displaystyle \Z[c_A]/I_\MS}$ and $H^{\ast}(\Dc_\MS,\Z)$.
In particular,  in the case 
$\MS=\{V\}$, we obtain 
\begin{displaymath}
\Z[c_A]/I_{\{V\}}	\simeq H^{\ast}(Y_{\G},\Z)
\end{displaymath}
\label{coom}
\end{theorem}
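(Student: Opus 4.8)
The plan is to prove the presentation by realizing $Y_\G$ as an iterated blow-up and matching the algebraic structure of $\Z[c_A]/I_\MS$ with the topological blow-up formula, proceeding by induction on $|\G|$.

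First I would fix a linear order $G_1,\dots,G_N=V$ on the elements of $\G$ refining inclusion (so that $G_i\subsetneq G_j$ forces $i<j$). As shown in \cite{DCP1}, with this order $Y_\G$ is obtained from $\pp(V)$ by successively blowing up the proper transforms of the subspaces $\pp(G_i^\perp)$, and the exceptional divisor created at the $i$-th step is precisely $\Dc_{G_i}$. Restricting to a $\G$-nested set $\MS$, the subvariety $\Dc_\MS$ is again a wonderful model (for the arrangement induced on the relevant quotient), so it is enough to run a single induction that treats all the $\Dc_\MS$ simultaneously. The first point to establish is that $\phi$ is surjective. Using the classical blow-up formula $H^k(\widetilde X,\Z)\cong H^k(X,\Z)\oplus\bigoplus_{i=1}^{c-1}H^{k-2i}(Z,\Z)$, where $c=\operatorname{codim} Z$, every cohomology class created by blowing up $\pp(G_i^\perp)$ is a polynomial in the class of the new exceptional divisor with coefficients pulled back from the base. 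By induction on $N$ this shows that the divisor classes $[\Dc_A]$ generate $H^{\ast}(\Dc_\MS,\Z)$ as a ring, hence $\phi$ is onto.

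Next I would verify that $\phi$ kills the ideal $I_\MS$, i.e. that $\phi(P_{\cal H,B}^\MS)=0$ for every admissible $\cal H$ and $B$. The monomial $\prod_{A\in\cal H}c_A$ maps to the product of the divisor classes $[\Dc_A]$; geometrically such divisors intersect only along the nested configuration, and along that intersection the normal directions are governed by the projectivized normal bundle attached to $\pp(V/B^\perp)$. The factor $\bigl(\sum_{B\subset C}c_C\bigr)^{d_{\cal H,B}^\MS}$ represents a power of the corresponding tautological class, and the integer $d_{\cal H,B}^\MS=\dim B-\dim\bigl(\sum_{A\in\cal H\cup\MS_B}A\bigr)$ is exactly the codimension beyond which that power vanishes on the bundle. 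Checking this is a local computation on each exceptional divisor, where the required identity reduces to the defining relation of the cohomology of a projective bundle.

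The main obstacle is the reverse inclusion: showing that $I_\MS$ accounts for all relations, so that the induced surjection $\Z[c_A]/I_\MS\to H^{\ast}(\Dc_\MS,\Z)$ is an isomorphism. My strategy is a graded rank count. On the algebraic side one extracts an additive spanning set of $\Z[c_A]/I_\MS$ consisting of monomials $\prod_{A\in\Sc}c_A^{m_A}$ indexed by a $\G$-nested set $\Sc$ together with exponents satisfying strict bounds of the form $0<m_A<d_{\emptyset,A}^{\Sc}$ derived from the degree relations; one must check these monomials span and are independent modulo $I_\MS$. On the topological side, the iterated blow-up formula yields a recursion for the Betti numbers of $\Dc_\MS$ indexed by the same nested data, since each blow-up of $\pp(G_i^\perp)$ contributes shifted copies of the cohomology of a smaller wonderful model. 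I would then set up both recursions by peeling off a maximal element of $\G$ (splitting nested sets into those containing it and those not) and verify that they agree term by term. Since $\phi$ is already surjective and the cohomology is torsion free, equality of graded ranks forces $\phi$ to be an isomorphism; specializing to $\MS=\{V\}$ gives $\Z[c_A]/I_{\{V\}}\cong H^{\ast}(Y_\G,\Z)$.
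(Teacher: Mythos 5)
First, a point of reference: the paper does not prove Theorem \ref{coom} at all --- it is recalled from \cite[Section 5.2]{DCP1} --- so your attempt must be measured against De Concini--Procesi's original argument. Your overall route is close to theirs: realize $Y_\G$ as an iterated blow-up, run the induction simultaneously over all the strata $\Dc_\MS$ (each of which is again a wonderful model), prove surjectivity of $\phi$ from the blow-up formula, and check the relations $P^\MS_{{\cal H},B}\mapsto 0$ by identifying $\sum_{B\subset C}c_C$ with a tautological class that dies in degree $d^\MS_{{\cal H},B}$. Where you genuinely diverge is the injectivity step. You propose a graded rank count against the admissible monomials; that is really Yuzvinsky's later contribution in \cite{YuzBasi}, and it obliges you to prove two further facts (that the admissible monomials span $\Z[c_A]/I_\MS$, with the exponent bounds $d^\MS_{(\supp f)_A,A}$ depending on the whole support, not just your $d^{\Sc}_{\emptyset,A}$; and that the topological Betti recursion matches term by term). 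De Concini and Procesi instead get the presentation, hence injectivity, for free at each step by transporting it along the blow-up via exactly the Keel-type statement that this paper records as Lemma \ref{lem:keel} and exploits for Theorem \ref{thm:cohomology}: $H^*(\widetilde X)\cong H^*(X)[\zeta]/(J\cdot\zeta,\,P(-\zeta))$ with $J$ the kernel of restriction to the center. Note that this --- and also your own surjectivity step, which writes the new classes as polynomials in the exceptional class with coefficients pulled back from the ambient variety rather than merely from the center --- requires the restriction maps $H^*(X_i)\to H^*(\text{center})$ to be surjective at every stage; this surjectivity must be carried explicitly through the induction, and your sketch never states it.

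There is also one step that fails as written: the blow-up order. You require $G_i\subsetneq G_j\Rightarrow i<j$, i.e.\ you blow up the centers of \emph{larger} dimension first, since $G_1\subsetneq G_2$ gives $\pp(G_1^\perp)\supsetneq\pp(G_2^\perp)$. That is backwards: an initial segment consisting of a big center alone is not building in the sense needed for Theorem \ref{teo:listabuilding} (a smaller center contained in it is not the transversal intersection of the chosen elements), so the iterated blow-up along proper/dominant transforms is not the wonderful model in general, and the exceptional divisor of the $i$-th step need not be $\Dc_{G_i}$. The correct convention (Remark \ref{rem:ordinescoppiamenti}, and \cite{DCP1}) refines \emph{reverse} inclusion on $\G$: centers of smaller dimension first, i.e.\ larger subspaces $G$ first; correspondingly, your final recursion should peel off the last center blown up (a minimal element of $\G$), not a maximal one. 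This is repairable, but as stated the construction underlying every subsequent step of your argument is the wrong tower.
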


\begin{definition}

 Let $\G$  and   $\MS$ be as before. A function $f\: :\: \G \: \mapsto \N $ is 
called $\G,\MS$-admissible if it is $f=0$ or if $f\neq 0$, $supp\, f\cup \MS$ is 
$\G$-nested and, for every $A\in supp\, f$, 
$f(A)<d_{(supp\, f)_A,A}^{\MS}$.
\end{definition}

Now, given a $\G,\MS$-admissible function $f$, we can consider  in 
$H^{\ast}({ \mathcal D}_\MS,\Z)\simeq \Z[c_A]/I_{\MS}$ the monomial 
${\displaystyle m_f=\prod _{A\in \G}^{}c_A^{f(A)}}$. We will call 
``$\G,\MS$-admissible'' such monomials.

\begin{theorem}[{see \cite[Section 3]{YuzBasi} and \cite[Section 2]{GaiffiBlowups}}]\label{base coomologia}
The set ${\cal B}_{\G,\MS}$ of $\G,\MS$-admissible monomials is a $\Z$-basis for 
$ H^{\ast}({ \mathcal D}_\MS,\Z) $.
\end{theorem}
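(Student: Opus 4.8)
The plan is to discard the geometry and argue entirely inside the graded ring $\Z[c_A]/I_\MS$, which by Theorem~\ref{coom} is isomorphic to $H^{\ast}(\Dc_\MS,\Z)$. I must show that the $\G,\MS$-admissible monomials descend to a $\Z$-basis of this quotient, and I would split the claim into a spanning statement and an independence statement. Throughout I take $\MS=\{V\}$ (so $\Dc_\MS=Y_\G$) as the representative case, the general nested $\MS$ being handled by the same argument once one records that each $\Dc_\MS$ is itself built from smaller wonderful models of the induced arrangements.

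First I would prove spanning by a straightening algorithm. Fix a term order on $\Z[c_A]$ refining the rule $c_C\prec c_B$ whenever $B\subsetneq C$; then the leading term of each relation $P_{\mathcal H,B}^\MS$ is the monomial $\bigl(\prod_{A\in\mathcal H}c_A\bigr)c_B^{\,d}$ with $d=d_{\mathcal H,B}^\MS$. Given a monomial $m_f$ with $f$ not admissible, admissibility fails in exactly one of two ways. If $\supp f\cup\MS$ is not $\G$-nested, I extract pairwise non-comparable elements whose sum $B$ lies in $\G$ and set $\mathcal H$ to be those of them in $\supp f$; then $d_{\mathcal H,B}^\MS=0$, so $P_{\mathcal H,B}^\MS=\prod_{A\in\mathcal H}c_A$ is a monomial that divides $m_f$, whence $m_f\in I_\MS$ is already zero in the quotient. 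If instead $\supp f\cup\MS$ is nested but $f(A)\ge d_{(\supp f)_A,A}^{\MS}$ for some $A\in\supp f$, I take $\mathcal H=(\supp f)_A$ and $B=A$; since $m_f$ is divisible by the leading term $\bigl(\prod_{A'\in\mathcal H}c_{A'}\bigr)c_A^{\,d}$, the relation lets me rewrite $m_f$ as a $\Z$-combination of monomials with strictly smaller leading term. As the order well-orders each graded piece, this rewriting terminates with an admissible representative, so the admissible monomials generate $H^{\ast}(\Dc_\MS,\Z)$.

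The harder half is linear independence, and here I would use the inductive blow-up description of $Y_\G$ rather than a bare algebraic count. Realizing $Y_\G$ as an iterated blow-up of $\Proj(V)$ along the (proper transforms of the) strata $\Proj(G^\perp)$ dictated by $\G$, taken in an order compatible with reverse inclusion so that every centre is smooth and transverse to the previously created divisors, I would induct on the number of blow-ups via the classical formula $H^{\ast}(\widetilde X)\cong H^{\ast}(X)\oplus\bigoplus_{j=1}^{c-1}H^{\ast-2j}(Z)$ for a blow-up along a smooth centre $Z$ of codimension $c$. The admissible monomials split according to the exponent of the last exceptional class $c_G=[\Dc_G]$: those with $f(G)=0$ match an (inductively furnished) basis of $H^{\ast}(X)$, while those with $f(G)=j>0$, after dividing out $c_G^{\,j}$, match the pieces $H^{\ast-2j}(Z)$, the bound $f(G)<d^\MS_{(\supp f)_G,G}$ being exactly what records $\operatorname{codim}Z$ and cuts off the exceptional contribution at $j\le c-1$.

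The main obstacle will be making this bookkeeping match the admissibility conditions exactly: one must verify that the nestedness of $\supp f\cup\MS$ and the exponent bounds $f(A)<d^\MS_{(\supp f)_A,A}$ translate, step by step, into precisely the direct-sum decomposition of the blow-up formula, and that the restriction of an admissible monomial to the centre $Z$ is again admissible for the arrangement induced on $Z$. Granting that combinatorial-geometric matching, spanning and independence together yield the theorem. As an alternative route to independence once spanning is known, I would verify that the generators $P^\MS_{\mathcal H,B}$ form a Gröbner basis for $I_\MS$ under the chosen order, so that the admissible monomials are exactly the standard monomials; this reduces the claim to a finite check that all S-polynomials reduce to zero.
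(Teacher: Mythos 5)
This theorem is stated in the paper without proof --- it is recalled from \cite{YuzBasi} and \cite{GaiffiBlowups} --- and your two-part plan is essentially the route those sources take: straightening against the relations $P^\MS_{{\mathcal H},B}$ for spanning (De Concini--Procesi/Yuzvinsky), and induction over an iterated blow-up construction for independence (Gaiffi). Your spanning half is correct and complete as written: the choice ${\mathcal H}=$ (antichain members in $\supp f$) with $B$ their sum does give $d^\MS_{{\mathcal H},B}=0$ (the antichain lies in ${\mathcal H}\cup\MS_B$ and sums to $B$, and ${\mathcal H}\neq\emptyset$ because $\MS$ itself is nested), so non-nested monomials lie in $I_\MS$ outright; the leading-term divisibility and termination argument for the exponent-overflow case is the standard one and works.

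The gap is in the independence half, and it is twofold. First, your bookkeeping claim that the bound $f(G)<d^\MS_{(\supp f)_G,G}$ ``records $\operatorname{codim}Z$ and cuts off the exceptional contribution at $j\le c-1$'' is false as stated: $d^\MS_{(\supp f)_G,G}=\dim G-\dim\bigl(\sum_{A\in(\supp f)_G}A\bigr)$ depends on the whole monomial and is in general \emph{strictly smaller} than the codimension $c$ of the centre being blown up; only monomials with $(\supp f)_G=\emptyset$ see the full range $j\le c-1$. The refined bound appears only when the factor $\eta$ multiplying $c_G^{\,j}$ is taken from a basis of $H^*(Z)$ for the centre $Z$ (itself a smaller wonderful model) and one inducts on $Z$ --- precisely the ``combinatorial-geometric matching'' you explicitly defer, which is not a routine verification but the actual content of the proof in \cite{GaiffiBlowups} (and of the analogous inductive argument this paper runs for Theorem \ref{thm:cohomology}, where the exponents are bounded by $|\Sc_i|-|\Sc_{i-1}|-1$ for exactly this reason). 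Second, the naive splitting $H^*(\widetilde X)\cong H^*(X)\oplus\bigoplus_{j=1}^{c-1}H^{*-2j}(Z)$ presupposes that classes on $Z$ lift to $X$, i.e. surjectivity of $H^*(X)\to H^*(Z)$ together with control of its kernel and of the Chern polynomial of the normal bundle; this is what Keel's result (Lemma \ref{lem:keel} here) packages, and the surjectivity must itself be carried along the induction rather than assumed. Your fallback Gr\"obner route is likewise only gestured at: proving the $P^\MS_{{\mathcal H},B}$ form a Gr\"obner basis is essentially equivalent to the independence claim, and the ``finite check'' of S-polynomials is neither uniform in $n$ nor performed. So: spanning done; independence is the right strategy but its decisive step is named rather than proved, and the one quantitative claim you do make about the exponent bound must be corrected to the nested, stratum-by-stratum version before the induction closes.
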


\subsection{A More General Construction}
\label{secgeneralblowup}

The construction of De Concini-Procesi models  can be viewed as a  special case of  other more general constructions that, starting from a `good' stratified variety, produce models by blowing up a suitable subset of  strata. Among these there are  the models  described by MacPherson and Procesi in \cite{procesimacpherson} and by  Li in \cite{li}.
 In Li's paper one can also find a  comparison among several  constructions of wonderful compactifications by  Fulton-Machperson (\cite{fultonmacpherson}),  Ulyanov (\cite{Ulyanov}), Kuperberg-Thurston (\cite{Kuperbergthurston}), Hu (\cite{Hu}). A further interesting survey including tropical compactifications can be found in Denham's paper \cite{denham}.

We recall here some basic facts adopting the language  and the notation of Li's paper.

\begin{definition}\label{def:simple} A simple arrangement of subvarieties (or `simple stratification') of a nonsingular variety \(Y\) is a finite set \(\Lambda = \{\Lambda_i\}\) of nonsingular closed subvarieties \(\Lambda_i\) properly contained in \(Y\) satisfying the following conditions:\\
(i) the  intersection of \(\Lambda_i\) and \(\Lambda_j\)  is nonsingular and the tangent bundles satisfy \(T(\Lambda_i\cap \Lambda_j) = T(\Lambda_i)_{|(\Lambda_i\cap \Lambda_j)}\cap T(\Lambda_j)_{|(\Lambda_i\cap \Lambda_j)}\),\\
(ii) \(\Lambda_i \cap \Lambda_j\) either is equal to some stratum in \(\Lambda\)  or is empty.

\end{definition}

\begin{definition}
Let \(\Lambda\) be an arrangement of subvarieties of  \(Y\).  A subset \(\G' \subseteq \Lambda\) is called a building set of \(\Lambda\)  if \(\forall \Lambda_i \in \Lambda - \G'\)  the minimal elements in \(\{G  \in \G' \: : \: G \supseteq \Lambda_i\}\)  intersect transversally and the intersection is \(\Lambda_i\).

\end{definition}

Then, if one has a simple stratification  \(\Lambda\) of a nonsingular variety \(Y\) and a  building set \(\G'\), one can construct a wonderful model \(Y_{\G'}\)   by  considering  (by  analogy with \cite{DCP1})  the closure of the image of the  locally closed embedding
\[\left( Y-\bigcup_{\Lambda_i\in \Lambda}\Lambda_i \right ) \rightarrow \prod_{G\in \G'}Bl_GY\] where \(Bl_GY\) is the blowup of \(Y\) along \(G\).

It turns out that 

\begin{theorem}[{see \cite[Theorem 1.3]{li}}]
\label{teo:listabuilding}
If one arranges the elements \(G_1,G_2,...,G_N\)  of \(\G'\) in such a way that for every \(1\leq i \leq N\) the set \(\{G_1,G_2,\ldots , G_i\}\) is building, then \(Y_{\G'}\) is isomorphic to the variety  \[Bl_{{\widetilde G_N}}Bl_{{\widetilde G_{N-1}}}\
\cdots Bl_{{\widetilde G_2}}Bl_{G_1}Y\]
where \({\widetilde G_i}\) denotes the dominant transform of $G_i$ in $Bl_{{\widetilde G_{i-1}}}
\cdots Bl_{{\widetilde G_2}}Bl_{G_1}Y$.
\end{theorem}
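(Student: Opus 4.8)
The plan is to argue by induction on the number \(N\) of elements of \(\G'\). The base case \(N=1\) is immediate: when \(\G'=\{G_1\}\) the defining embedding takes values in the single factor \(Bl_{G_1}Y\), and since \(Y-\bigcup_{\Lambda_i\in\Lambda}\Lambda_i\) is dense in \(Y\) while the blow-up is birational, the closure of its image is all of \(Bl_{G_1}Y\); hence \(Y_{\{G_1\}}=Bl_{G_1}Y\). The whole statement then reduces to a single \emph{peeling lemma}: under the ordering hypothesis there is a canonical isomorphism
\[Y_{\{G_1,\ldots,G_N\}}\;\cong\;Bl_{\widetilde{G_N}}\,Y_{\{G_1,\ldots,G_{N-1}\}},\]
where \(\widetilde{G_N}\) is the dominant transform of \(G_N\) in \(Y_{\{G_1,\ldots,G_{N-1}\}}\). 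Granting the lemma, the theorem follows by iterating it, because the hypothesis guarantees that every truncation \(\{G_1,\ldots,G_i\}\) is again a building set, so the inductive hypothesis rewrites \(Y_{\{G_1,\ldots,G_{N-1}\}}\) as the iterated blow-up of the first \(N-1\) centers.

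First I would construct the forgetful projection. Dropping the factor \(Bl_{G_N}Y\) from the product \(\prod_{G\in\G'}Bl_GY\) gives a priori only a rational map \(Y_{\{G_1,\ldots,G_N\}}\dashrightarrow Y_{\{G_1,\ldots,G_{N-1}\}}\); because removing the last element leaves a building set (by hypothesis), for the arrangement it induces, this map is in fact an everywhere-defined, surjective, birational morphism \(p\). The content of the peeling lemma is the identification of \(p\) with a single blow-up. The correct center is exactly the dominant transform \(\widetilde{G_N}\): where \(G_N\) is not contained in an already-processed stratum it is the strict transform, while where \(G_N\) lies inside an earlier center it must be taken to be the total scheme-theoretic preimage. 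It is precisely this dichotomy that makes \(\widetilde{G_N}\) a nonsingular subvariety along which blowing up is legitimate.

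To identify \(p\) with \(Bl_{\widetilde{G_N}}\,Y_{\{G_1,\ldots,G_{N-1}\}}\) I would invoke the universal property of blow-ups. On the one hand, I would show that the pullback under \(p\) of the ideal sheaf of \(\widetilde{G_N}\) is invertible, which yields a canonical morphism from \(Y_{\{G_1,\ldots,G_N\}}\) to the blow-up; on the other hand, the surviving coordinate in the factor \(Bl_{G_N}Y\) furnishes the inverse morphism, and one checks the two are mutually inverse. Both verifications are local, and here the simple-arrangement conditions of Definition \ref{def:simple} — in particular the tangent-bundle compatibility — are what permit one to linearize in suitable charts and reduce the computation to a model subspace arrangement, where the blow-up charts are explicit monomial transformations matching the De Concini--Procesi local picture recalled in Section \ref{constru}.

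The main obstacle is this local step. One must verify chart by chart that \(p\) contracts exactly the exceptional divisor over \(\widetilde{G_N}\), treating separately the two regimes in the definition of the dominant transform, and — crucially for the induction to close — that after blowing up \(\widetilde{G_N}\) the dominant transforms of the remaining strata still form a simple arrangement admitting \(\{\widetilde{G_1},\ldots,\widetilde{G_{N-1}}\}\) as a building set. Establishing that nonsingularity and the transversality built into Definition \ref{def:simple} are preserved under each blow-up, so that these hypotheses propagate down the tower, is the technical heart of the proof.
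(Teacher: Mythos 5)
Note first that the paper does not prove this statement at all: as the bracketed citation in the theorem header indicates, it is quoted from Li \cite{li} (Theorem 1.3), so there is no internal proof to compare against, and the right benchmark is Li's own argument. Your outline reproduces that argument in its essentials: Li also argues by induction on \(N\), the inductive step being exactly your peeling lemma, namely that the forgetful projection \(Y_{\{G_1,\ldots,G_N\}}\to Y_{\{G_1,\ldots,G_{N-1}\}}\) is the blow-up along the dominant transform \(\widetilde{G_N}\), identified via the universal property of blow-ups. One small simplification you could make: the forgetful map is automatically an everywhere-defined proper morphism, since both varieties are closures in products of blow-ups and are proper over \(Y\); the hypothesis that the truncation \(\{G_1,\ldots,G_{N-1}\}\) is building is needed not for the existence of \(p\) but to know that its target is a wonderful compactification and that the center \(\widetilde{G_N}\) is a nonsingular subvariety. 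The ``technical heart'' you single out --- that after each blow-up the dominant transforms again form a simple arrangement in which the transformed truncated set is building, so the hypotheses propagate down the tower --- is precisely the content of Li's propagation result (his Proposition 2.8 and its consequences), with the same local linearization using the clean-intersection condition of Definition \ref{def:simple}; the analogous statement for conical stratifications is in MacPherson--Procesi \cite{procesimacpherson}. You also handle the definition of the dominant transform correctly (strict transform when the variety is not contained in the center, total inverse image when it is), which is where a naive strict-transform-only version of the lemma would fail. So the decomposition is the right one and no step of the plan is wrong; what the proposal defers --- the chart-by-chart verification of the universal-property argument and of the preservation of the arrangement structure --- is exactly what the cited source supplies.
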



\begin{remark}
\label{rem:ordinescoppiamenti}
As remarked by Procesi-MacPherson in \cite[Section 2.4]{procesimacpherson} it is
always possible to choose a linear ordering on the set $\G'$ such that every initial segment is building. We can do this by ordering $\G'$ in such a way that we always blow up first the strata of smaller  dimension.
\end{remark}

We show two examples that will be crucial in the following sections.

\begin{example}
In the case of    subspace arrangements, the De Concini-Procesi construction  
and the above construction produce the same models  (the only warning is that in the preceding sections a building set \(\G\) was described in a dual way,  so the building set of subvarieties \(\G'\) is made by  the  orthogonals of the subspaces in  \(\G\)).

\end{example}

\begin{example}\label{es:boundary}
Given a  De Concini-Procesi   model \(Y_\G\), we notice that its boundary strata   give rise to a simple arrangement of subvarieties, and that the set of all strata is a building set. So it is possibile to  obtain a `model of the model \(Y_\G\)'.  
The boundary strata of these `models of models' are indexed by the nested sets of the building set of all the strata of \(Y_\G\).  More precisely, according to the definition given in \cite[Section 4]{procesimacpherson}, a nested set \(\Sc\) in this sense is a collection of \(\G\)-nested sets containing $\{V\}$ linearly ordered by inclusion
(we will come back to this, using a more combinatorial definition,  in Section \ref{sec:buildingcombinatorial}). 
\end{example}

%

\section{Combinatorial Building Sets}
\label{sec:buildingcombinatorial}
After De Concini and Procesi's paper \cite{DCP1},    nested sets and building sets appeared in the literature, connected with several combinatorial problems.  
In   \cite{feichtnerkozlovincidence}   building sets and nested sets  were defined in the  general context of meet-semilattices, and in   \cite{delucchinested} their  connection with Dowling lattices was investigated.  Other purely combinatorial definitions were used   to  give rise to  the polytopes that  were named {\em nestohedra} in \cite{postnikoreinewilli}. 

Here we recall  the combinatorial definitions of building sets and nested sets of a power set   in the spirit of \cite{postnikoreinewilli}, \cite{postnikov} (one can  refer to \cite[Section 2]{petric2} for a short comparison among various definitions and notations in the literature). 

\begin{definition}
\label{def:buildingpowerset}
A building set   of the power set \(\Pc(\{1,2,...,n\})\) is a subset \({\mathcal B}\) of \(\Pc(\{1,2,...,n\})\) such that:
\begin{itemize}
\item[a)] If \(A,B \in {\mathcal B}\) have nonempty intersection, then \(A\cup B\in {\mathcal B}\).
\item[b)]The set \(\{i\}\) belongs to \({\mathcal B}\) for every \(i\in \{1,2,...,n\}\).
\end{itemize}

\end{definition}

\begin{definition}
\label{def:nestedpowerset}
A (nonempty) subset  \({\mathcal S}\) of a building set  \({\mathcal B}\) is a $\mathcal B$-nested set (or just nested set if the context is understood) if and only if the following two conditions hold: 
\begin{itemize}
\item[a)] For any \(I,J\in {\mathcal S}\) we have that either \(I\subset J\) or \(J\subset I\) or \(I\cap J=\emptyset\).
\item[b)] Given elements  \(\{J_1,...,J_k\}\) (\(k\geq 2\)) of \({\mathcal S}\) pairwise not comparable with respect to inclusion, their union is not in \({\mathcal B}\). 
\end{itemize}

\end{definition}


\begin{definition}The nested set complex  \({\mathcal N}({\mathcal B})\) is the poset  of all  the nested sets of \({\mathcal B}\) ordered by inclusion. 
\end{definition}


We notice  that actually \({\mathcal N}({\mathcal B})\cup \{\emptyset \}\)  is a simplicial complex.

\begin{definition}
If the set \({\mathcal B}\) has a minimum $\mu$, the nested set complex  \({\mathcal N}'({\mathcal B})\) is the poset  of all  the nested sets of \({\mathcal B}\) containing $\mu$, ordered by inclusion. 
\end{definition}

In particular, let us denote by  $\Bc(n-1)$ 
the poset ${\mathcal N}'(\Fc_{A_{n-1}})$ given
by the nested sets in \(\Fc_{A_{n-1}}\) that contain $\{V\}$. 

We observe that any element in ${\mathcal N'}(\Fc_{A_{n-1}})$ can be obtained by the union of $\{V\}$ with an element of  
\[ \Pc(\A_1')\cup \Pc(\A_2')\cup...\cup \Pc(\A_s')\]
where \(\A_j' = \A_j- \{V\}\) and $\A_j$ are all the maximal nested sets associated with the building set \(\Fc_{A_{n-1}}\) (and \(\Pc(\ ) \)   denotes the power set).
Given a simplicial complex \(\Cc\) which is based on some sets \(\A_1',..,\A_s'\) (i.e., it is equal to 
\(\Pc(\A_1')\cup \Pc(\A_2')\cup...\cup \Pc(\A_s')\)),  Feichtner and Kozlov's  definition  of building set of a meet semilattice (see \cite[Section 2]{feichtnerkozlovincidence}) can be expressed    in the following way: \(\Bc\subseteq \Cc\) is a building set of \(\Cc\) if and only if for every \(j=1,2,...,s\) the set \(\Bc\cap \Pc(\A_j')\) is a building set of \(\Pc(\A_j')\) in the sense of Definition \ref{def:buildingpowerset}.

Again, according to Feichtner and Kozlov,  given a building set \(\Bc\) of \(\Cc\) as before, a \(\Bc\)-nested set is a subset \(\Sc\) of \(\Bc\) 
such that, for every antichain (with respect to inclusion) \(\{X_1,X_2,...,X_l\}\subseteq \Sc\), the union \(X_1\cup X_2\cup ..\cup X_l\) belongs to \(\Cc-\Bc\).

These definitions of building set and nested sets can be extended in a natural way to  \(\Bc(n-1)={\mathcal N}'(\Fc_{A_{n-1}})\) . In particular,  the   maximal building set of \(\Bc(n-1)\) is \(\Bc(n-1)\) itself. 
As we observed in Section \ref{constru}, the strata of \(Y_{\Fc_{A_{n-1}}}\) are indexed by the elements of  $\Bc(n-1)$ and, 
as we remarked in Section \ref{secgeneralblowup}, the set of all these strata is a  building set in the sense\footnote{According to the original definition, the building set is  $\Bc(n-1)-\{\{V\}\}$, but one can immediately check that one can add \(\{V\}\) without affecting the construction.}  of  \cite{procesimacpherson} \cite{li} and \cite{gaimrn0}, therefore we can construct the corresponding (real or complex) variety 
\(Y_{\Bc(n-1)}\).

Translating into these combinatorial terms the definition given in \cite[Section 4]{procesimacpherson},
the strata  of the  variety \(Y_{\Bc(n-1)}\) 
are indexed by the nested sets of $\Bc(n-1)$ containing \(\{V\}\) in the following way: 
a stratum of codimension \(r\) is indexed by  \(\{\{V\}, \Tc_1,..., \Tc_r\}\)  where each \(\Tc_i\) belongs to  \(
\Bc(n-1)
\) and 
\[\{V\}\subsetneq  \Tc_1\subsetneq \cdots \subsetneq \Tc_r.\]

%

\section{The geometric extended \(S_{n+1}\) action on $Y_{\Fc_{A_{n-1}}}$} \label{sec:action1}

We recall that there  is a well know `extended' \(S_{n+1}\) action on the De Concini-Procesi (real or complex)  model \(Y_{\Fc_{A_{n-1}}}\):  it comes from the isomorphism with the moduli space \({\overline M_{0,n+1}}\) 
and the character of the resulting representation  on cohomology has been computed in \cite{rains2009} in the real case, and in \cite{getzler} in the complex case.

In order to describe how \(S_{n+1}\) acts on the strata   of \(Y_{\Fc_{A_{n-1}}}\) it is sufficient to show the corresponding action on \(\Fc_{A_{n-1}}\), since these strata are indexed by the elements of \(\Bc(n-1)\).

Let \(\Delta=\{\alpha_0,\alpha_1,...,\alpha_{n-1}\}\) be a basis for the root system \(\Phi_{A_n}\) of type \(A_n\) (we added to a basis of \(A_{n-1}\) the extra root \(\alpha_0\)). 
We identify in the standard way  \(S_{n+1}\) with the group which permutes \(\{0,1,...,n\}\) and    \(s_{\alpha_0}\) with the transposition \((0,1)\). Therefore    \(S_{n}\), the subgroup generated by \(\{s_{\alpha_1},...,s_{\alpha_{n-1}}\}\), is identified with the subgroup which permutes \(\{1,...,n\}\).

 Let \(A\) be a subspace in \(\Fc_{A_{n-1}}\) different from \(V\),  let \(\sigma \in S_{n+1}\) and let us consider the subspace \(\sigma A\) according to the natural action of \(S_{n+1}\) on  \(\Fc_{A_{n}}\). Morover  we   denote  by \(\overline A\) the subspace generated by all the roots of \(\Phi_{A_n}\)   that are orthogonal to \(A\). We notice that if 
some of the roots contained in \(\sigma A\) have  \(\alpha_0\) in their support 
 then \(\sigma \overline A\) belongs to \(\Fc_{A_{n-1}}\).
 Therefore we define the action of $S_{n+1}$ on  \(\Fc_{A_{n-1}}\) as follows. Let $\sigma \in S_{n+1}$ and 
 \(A \in \Fc_{A_{n-1}}\).
 We set \(\sigma \cdot V:=V\) and for $A \neq V$ we define $$\sigma \cdot A := \left\{ 
 \begin{array}{ll}
 \sigma A & \mbox{ if }\sigma A \in  \Fc_{A_{n-1}} \\
  \sigma \overline A & \mbox{ otherwise.}
 \end{array}
 \right.
 $$

Let us now write explicitly    how the above described  action extends to  \(\Bc(n-1)\).  Let \[\Sc=\{V, A_1,A_2,...,A_k, B_1,B_2,...,B_s\}\] be an element of  \(\Bc(n-1)\), i.e. a nested set  in \({\Fc_{A_{n-1}}}\)  that contains \(V\) and let \(\sigma \in S_{n+1}\).  
Moreover, let us suppose that, for every subspace \( A_j\),  the subspace \(\sigma A_j\) doesn't belong to \(\Fc_{A_{n-1}}\), while  the subspaces 
\(\sigma B_t\) belong to \(\Fc_{A_{n-1}}\). Then \(\sigma \Sc= \{V, ..,\sigma\overline{A_j},.., \sigma B_s,..\}\). 
As one can quickly check, \(\sigma \Sc \in \Bc(n-1)\). 
\begin{remark}This  action can also be lifted to the minimal spherical model of type \(A_{n-1}\)  (see for instance the exposition in \cite[Section 3]{callegarogaiffi3}).
\end{remark}

%


\section{The Extended Action on Bigger Models: the Example of \(A_3\)}
\label{sec:supermaximal}
From now on the minimal and the maximal models associated with the root system $A_{n-1}$ will play a special role in this paper. Hence it is convenient to single out them by a new notation. 
\begin{definition}
We will denote by \(Y_{minA_{n-1}}\) the minimal model  \(Y_{\Fc_{A_{n-1}}}\) 
and by \(Y_{maxA_{n-1}}\) the maximal model \(Y_{\Cc_{A_{n-1}}}.\)
\end{definition}
It is known that is not possible to extend the \(S_{n+1}\) action from the 
strata of the boundary
of  \(Y_{minA_{n-1}}\) to the 
strata of the boundary  of the non minimal models
 (see for instance   \cite[Remark 5.4]{GaiffiServenti2}).

Now we want to construct  a model which is `bigger' than \(Y_{maxA_{n-1}}\) (i.e. it admits a birational projection onto \(Y_{maxA_{n-1}}\)) and is equipped with an   \(S_{n+1}\) action.  We will call {\em supermaximal}  a  model which is minimal  among the models that have  these properties.   
Let us construct this by an  example in the case \(Y_{maxA_{3}}\).

\begin{example} 
We consider the action of the group $S_5$ on the  model \(Y_{minA_{3}}\):
the transposition $s_{\alpha_0}$ maps the $1$-dimensional strata $\{V, <\alpha_1>\}$ and $\{V, <\alpha_3>\}$ as follows:
$$
s_{\alpha_0}\{V, <\alpha_1>\}= \{V, s_{\alpha_0}\overline{<\alpha_1>}\}=\{V, <\alpha_1+\alpha_2,\alpha_3>\}
$$
and
$$
s_{\alpha_0}\{V, <\alpha_3>\}=\{V, s_{\alpha_0}<\alpha_3>\}= \{V, <\alpha_3>\}.
$$

As one of the steps in the construction of  $Y_{maxA_{3}}$,   we blow up $Y_{minA_{3}}$ along  the intersection $$\{V, <\alpha_1>\} \cap \{V, <\alpha_3>\} = \{V,  <\alpha_1>, <\alpha_3>\}.$$ Hence, in order to have a model with an extended  $S_5$ action, we also need  to blow up $Y_{maxA_{3}}$ along the intersection $$\{V, <\alpha_1+\alpha_2,\alpha_3>\} \cap \{V, <\alpha_3>\} = \{V, <\alpha_1+\alpha_2,\alpha_3>, <\alpha_3>\}.$$
Actually, because of the \(S_4\) symmetry, one has to blow up in $Y_{maxA_{3}}$ all the points  $\{V, <\alpha,\beta>, <\alpha> \}$, where  the two roots \(\alpha, \beta\) span an irreducible root subsystem.

Let us denote by \(Y_{supermaxA_{3}}\) the model obtained as the  result of  all these blowups:  one can immediately check that \(Y_{supermaxA_{3}}\) coincides with \(Y_{\Bc(3)}\) and therefore the   \(S_{5}\)  action on  \(Y_{minA_{3}}\) described in Section 
\ref{sec:action1} 
extends to  $Y_{supermaxA_{3}}$. In the next two   sections we will prove that   \(Y_{\Bc(n-1)}\) is a supermaximal model  for every \(n\geq 3\).
\end{example}
%

\section{The \(S_{n+1}\) action on \(Y_{\Bc(n-1)}\) and the minimality property of  \(\Bc(n-1)\)} 
\label{sec:chiusuraazione}

As explained in Section \ref{sec:buildingcombinatorial} the strata of \(Y_{\Bc(n-1)}\) are in correspondence with the elements of \({\mathcal N}'(\Bc(n-1) )\).  The \(S_{n+1}\) action on the open part of \(Y_{\Bc(n-1)}\) can be extended to the boundary. 
In fact one  can immediately check that the   geometric    \(S_{n+1}\) action on \(\Bc(n-1) \) can be extended to \({\mathcal N}'(\Bc(n-1) )\): let \(\{\{V\}, \Tc_1,..., \Tc_r\}\) be and element of \({\mathcal N}'(\Bc(n-1) )\) and let \(\sigma \in S_{n+1}\), then \(\sigma\) sends \(\{\{V\}, \Tc_1,..., \Tc_r\}\) to \(\{\{V\}, \Tc_1',..., \Tc_r'\}\) where, 
for every \(i\), $\Tc_i'=\sigma \Tc_i$  according to the action on \(\Bc(n-1) \) illustrated in the end of Section \ref{sec:action1}. From the inclusions $\{V\}\subsetneq  \Tc_1\subsetneq \cdots \subsetneq \Tc_r$
it immediately follows that $\{V\}\subsetneq  \Tc_1'\subsetneq \cdots \subsetneq \Tc_r'$, therefore \(\{\{V\}, \Tc_1',..., \Tc_r'\}\) belongs to \({\mathcal N}'(\Bc(n-1) )\).

Now we address the following combinatorial problem:  what is the minimal building set in   \(\Bc(n-1) \) that is closed under the \(S_{n+1}\) action and `contains' \(\Cc_{A_{n-1}}\) ? 
We start by  expressing in a precise way what we mean with `contains' \(\Cc_{A_{n-1}}\).

Recall that we write   \({\mathcal N}'(\Cc_{A_{n-1}})\) for the poset given
by the nested sets in \(\Cc_{A_{n-1}}\) that contain $\{V\}$, i.e. the poset that indicizes the strata of 
\(Y_{maxA_{n-1}}\).

\begin{proposition}
There is a graded  poset   embedding \(\varphi\) of  
\({\mathcal N}'(\Cc_{A_{n-1}})\)
into \({\mathcal N}'(\Bc(n-1) )\). 
\end{proposition}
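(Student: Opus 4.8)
The approach I would take is to realise $\varphi$ concretely, by attaching to a chain of subspaces a flag of $\Fc_{A_{n-1}}$-nested sets built from the $\Fc_{A_{n-1}}$-decompositions of its members, and then to check four things in turn: that the output lands in ${\mathcal N}'(\Bc(n-1))$, that it preserves the codimension (so that the embedding is graded), that it is injective, and — the real content — that it is an embedding of posets, i.e. order preserving \emph{and} order reflecting.

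\emph{The candidate map and the routine parts.} An element of ${\mathcal N}'(\Cc_{A_{n-1}})$ is a chain $\Sc=\{V,C_1,\dots,C_r\}$ with $C_1\subsetneq\cdots\subsetneq C_r\subsetneq V$; in the language of Section \ref{secexamplean} this is a chain of partitions $\pi_1\prec\cdots\prec\pi_r$, each refining the next. Writing $D_i\subseteq\Fc_{A_{n-1}}$ for the $\Fc_{A_{n-1}}$-decomposition of $C_i$ (the blocks of $\pi_i$ of cardinality $\geq 2$), a first candidate is $\Tc_i:=\{V\}\cup D_1\cup\cdots\cup D_i$ and $\varphi(\Sc):=\{\{V\},\Tc_1,\dots,\Tc_r\}$. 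Since the $\pi_j$ refine one another, any two blocks occurring in the chain are nested or disjoint, so each $\Tc_i$ is genuinely an $\Fc_{A_{n-1}}$-nested set, hence an element of $\Bc(n-1)$; thus $\varphi(\Sc)$ is at least a subset of $\Bc(n-1)$ linearly ordered by inclusion. Injectivity is then cheap: because $\pi_i$ is the coarsest of $\pi_1,\dots,\pi_i$, the maximal elements of $\Tc_i\setminus\{V\}$ are exactly the blocks of $\pi_i$ of cardinality $\geq 2$, from which $\pi_i$, hence $C_i$, hence $\Sc$, is recovered.

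\emph{The main obstacle.} The hard part — and the step I expect to be delicate — is to pin down the construction so that it is simultaneously a strict, length-$r$ flag (so that $\varphi$ is graded and well defined into ${\mathcal N}'(\Bc(n-1))$) and compatible with the poset order. Both difficulties are genuine. Strictness $\Tc_{i-1}\subsetneq\Tc_i$ can fail precisely when the step $\pi_{i-1}\prec\pi_i$ merges only singletons, for then $D_i$ contributes a new block of cardinality $\geq 2$ but $D_{i-1}\subseteq D_i$, so no index is actually gained unless one records \emph{at which level} each block first appears. Order preservation is subtler still: on passing to a subchain $\Sc\subseteq\Sc'$ one deletes some $\pi_j$, and the partial unions $\Tc_i$ re-accumulate the blocks $D_j$ in a different pattern, so the flag of $\Sc$ need not literally reappear as a subflag of the flag of $\Sc'$. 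The core of the proof must therefore be a bookkeeping device that keeps track not merely of the accumulated irreducible components but of the refinement level carrying each of them, arranged so that ``forgetting a partition from the chain'' corresponds exactly to ``forgetting the matching family from the flag''.

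Concretely, I would isolate this compatibility as a single comparison lemma — that, for $C\subsetneq C'$ appearing consecutively in a chain, the blocks created strictly between the two levels are exactly those forced by the two neighbouring partitions — and prove it by induction on $r$, reducing to the one-step case of removing a single intermediate $C_i$. Order reflection would then follow formally from injectivity together with the recovery procedure above, so that the whole statement is reduced to this lemma; the grading being preserved is immediate once strictness of the flag is established. I expect essentially all the difficulty of the Proposition to be concentrated in getting this level-indexed correspondence right.
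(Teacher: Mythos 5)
Your candidate map is exactly the paper's: for a chain \(\{V,C_1\subsetneq\cdots\subsetneq C_r\}\) the paper takes \(\Tc'_i\) to be \(\{V\}\) together with the irreducible components of the \(i\) smallest members of the chain, which is precisely your cumulative set \(\Tc_i=\{V\}\cup D_1\cup\cdots\cup D_i\), and sets \(\varphi(\Sc)=\{\{V\},\Tc'_1,\ldots,\Tc'_r\}\). Your verifications of nestedness of each \(\Tc_i\) and of injectivity (recovering \(C_i\) from the maximal elements of \(\Tc_i\setminus\{V\}\)) are correct, and indeed more detailed than the paper, whose proof consists of the construction plus the statement that the map ``is easily seen to be a poset embedding''. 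But one of your two flagged difficulties is illusory: strictness \(\Tc_{i-1}\subsetneq\Tc_i\) can never fail. A strict coarsening step \(\pi_{i-1}\prec\pi_i\) produces at least one block of \(\pi_i\) that is the union of two or more blocks of \(\pi_{i-1}\); such a block cannot be a block of any finer partition in the chain (it would then be contained in a single block of \(\pi_{i-1}\) while containing at least two of them), so \(D_i\not\subseteq D_1\cup\cdots\cup D_{i-1}\) and no level bookkeeping is needed for gradedness.

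The genuine gap is the other point, which your proposal openly does not resolve: the whole statement is reduced to a ``comparison lemma'' that is never formulated precisely nor proved, and the subflag compatibility it would have to deliver is in fact false for this map. Take \(n=4\), \(C_1=\{1,2\}\subsetneq C_2=\{1,2,3\}\) (both irreducible). Then \(\varphi(\{V,C_1,C_2\})=\{\{V\},\{V,C_1\},\{V,C_1,C_2\}\}\), while \(\varphi(\{V,C_2\})=\{\{V\},\{V,C_2\}\}\), and \(\{V,C_2\}\) is not an element of the larger flag: deleting the bottom element of the chain does not correspond to deleting entries of the flag, because the cumulative unions re-aggregate. So the induction you sketch (``reduce to removing a single intermediate \(C_i\)'') cannot close as described, and the proposed repair — recording the refinement level carrying each block — is never defined and has no obvious home, since the elements of \({\mathcal N}'(\Bc(n-1))\) are bare chains of nested sets with no level decoration available. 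Your instinct that inclusion-order compatibility is the delicate point of the Proposition is sound (it is exactly what the paper dispatches with ``easily seen'', and in the sequel only the restriction of \(\varphi\) to \(F^1\), i.e. the injection \(\{V,B\}\mapsto\{V\}\cup\{\mbox{irreducible components of }B\}\), is ever used); but as written the proposal identifies the obstacle without overcoming it, and the specific route you commit to would fail on the example above.
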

\begin{proof}
Let \(\Tc\) be an element in 
\({\mathcal N}'(\Cc_{A_{n-1}})\). Then \(\Tc=\{ B_0=V, B_1,B_2,...,B_r\}\) is a nested set  of the building set \(\Cc_{A_{n-1}}\) containing \(V\). This means that its elements are linearly ordered by inclusion: \(V\supset B_1\supset \cdots \supset B_r\).
Now  we can express  every \(B_i\) as the direct sum of some irreducible subspaces \(A_{ij}\), i.e. elements of  \(\Fc_{A_{n-1}}\) (\(j=1,...,k_i\)).  We notice that, for every \(i=1,...,r\),  the sets  \(\Tc'_i=\{A_{sj}\} \cup \{V\}\) (with  \(s> r-i\) and, for every \(s\), \(j=1,...,k_s\))  is nested in \(\Fc_{{A_{n-1}}}\). The map \(\varphi\) defined by 
\[\varphi( \Tc)= \{\{V\}, \Tc'_1,...,\Tc'_r\}\]
if \(r\geq 1\), otherwise \[\varphi( \Tc)=\{\{V\}\}\]
is easily seen to be a poset embedding. 
\end{proof}

Given a  complex of nested sets   \(P\), we will  denote by  \(F^k(P)\) the subset made by the nested sets of cardinality \(k+1\). 

 The restriction of \(\varphi\)  to \(F^1({\mathcal N}'(\Cc_{A_{n-1}}))\) is an embedding of \(F^1({\mathcal N}'(\Cc_{A_{n-1}}))\) into \(F^1({\mathcal N}'(\Bc(n-1) ))\). Now \(F^1({\mathcal N}'(\Bc(n-1) ))\) can be identified with \(\Bc(n-1)\) (the identification maps \( \{\{V\}, \MS\}\), with \(\MS\) a nested set of \(\Fc_{{A_{n-1}}}\) that strictly contains  \(V\),  to \( \MS\)) and we still call \(\varphi\) the embedding  from 
$F^1( {\mathcal N}'(\Cc_{A_{n-1}}))$
 to \(\Bc(n-1)\). 
 More explicitly, if $B_1 \in \Cc_{{A_{n-1}}}$ is a subspace which is the direct sum of the irreducible subspaces $A_{11}, \ldots, A_{1k_1} $ then $$\varphi (\{V,B_1\})= \{V, A_{11}, \ldots, A_{1k_1}\}.$$

\begin{theorem}
\label{teo:chiusoazionebuilding}
The minimal building subset of  \(\Bc(n-1) \) which contains the image 
$\varphi(F^1({\mathcal N}'(\Cc_{A_{n-1}})))$
and is closed under the \(S_{n+1}\) action  is \(\Bc(n-1) \) itself.
\end{theorem}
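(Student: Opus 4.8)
The plan is to work entirely in the combinatorial model of Section \ref{sec:buildingcombinatorial}, where an element of \(\Bc(n-1)\) is a laminar family (a nested set) of subsets of \(\{1,\dots,n\}\) of cardinality \(\ge 2\), and where, in view of the footnote identification, the building-set structure is read on the families with the trivial member \(V\) deleted. Under the identification \(F^1({\mathcal N}'(\Bc(n-1)))\cong\Bc(n-1)\), the image \(\Sc_0:=\varphi(F^1({\mathcal N}'(\Cc_{A_{n-1}})))\) is exactly the set of \emph{flat} nested sets \(\{A_1,\dots,A_k\}\) whose members are pairwise disjoint: indeed \(\varphi(\{V,B_1\})=\{V\}\cup\{\text{irreducible components of }B_1\}\), and the components of a single \(B_1\in\Cc_{A_{n-1}}\) are pairwise disjoint blocks, while conversely any antichain of disjoint blocks extends to a partition. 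Thus I must show that the smallest building subset \(\B\) of \(\Bc(n-1)\) that is closed under \(S_{n+1}\) and contains all flat nested sets is \(\Bc(n-1)\) itself. I will exploit the two ways such a \(\B\) is forced to grow: condition (b) forces every one-element nested set (vertex) to lie in \(\B\), and condition (a) forces \(\MS_1\cup\MS_2\in\B\) whenever \(\MS_1,\MS_2\in\B\) share a common subspace and \(\MS_1\cup\MS_2\) is again nested.

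The main reduction is an induction on \(|\MS|\). Singletons and the empty family are already in \(\Sc_0\) (or forced by (b)). If \(|\MS|\ge 3\), I pick two distinct members \(A,B\in\MS\) and write \(\MS=(\MS\setminus\{A\})\cup(\MS\setminus\{B\})\); both pieces are nested as subfamilies of a laminar family, they share the nonempty family \(\MS\setminus\{A,B\}\), and their union \(\MS\) is nested, so condition (a) together with the inductive hypothesis places \(\MS\) in \(\B\). This collapses the problem to nested sets of cardinality exactly two, which come in only two kinds: disjoint pairs, which are flat and hence already in \(\Sc_0\), and chains \(\{C_1\subsetneq C_2\}\).

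The step I expect to be the crux is showing that every chain \(\{C_1\subsetneq C_2\}\), with \(C_1,C_2\) proper subsets of cardinality \(\ge 2\), lies in \(S_{n+1}\cdot\Sc_0\). Here I would use the explicit moduli description of the action: identify a subset \(I\subseteq\{1,\dots,n\}\) of cardinality \(\ge 2\) with the boundary divisor of \(\overline{M}_{0,n+1}\) cut out by the partition \(I\mid(\{0,1,\dots,n\}\setminus I)\), so that \(S_{n+1}\) permutes \(\{0,1,\dots,n\}\) and, whenever the image of \(I\) swallows the point \(0\), one passes to the complementary block — this is precisely the rule \(\sigma\cdot A=\sigma\overline A\) of Section \ref{sec:action1}, as one checks against the examples of Section \ref{sec:supermaximal}. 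Choosing \(p\in C_2\setminus C_1\) and applying \(\sigma=(0\ p)\) then fixes \(C_1\) and carries \(C_2\) to \((\{1,\dots,n\}\setminus C_2)\cup\{p\}\), a block of cardinality \(\ge 2\) (this is where \(C_2\neq V\) is used) that is disjoint from \(C_1\); hence \(\sigma\cdot\{C_1,C_2\}\) is a disjoint pair in \(\Sc_0\), so \(\{C_1,C_2\}\in S_{n+1}\cdot\Sc_0\subseteq\B\). Combining the three cases, every element of \(\Bc(n-1)\) is forced into \(\B\); since \(\Bc(n-1)\) is itself a building set, is closed under \(S_{n+1}\), and contains \(\Sc_0\), it is the minimal one. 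The only delicate bookkeeping will be the complementation convention in the action and the check that the produced block stays inside \(\{1,\dots,n\}\) with cardinality \(\ge 2\).
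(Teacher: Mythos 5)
Your proof is correct, and it takes a genuinely different route from the paper's. The paper argues by induction on the \emph{depth} of a nested set (the height of its associated rooted tree): depth-\(1\) sets are exactly the image of \(\varphi\); at depth \(2\) it combines the action (a chain \(\{V,B,B_1\}\) is \(\sigma\MS\) for a suitable \(\sigma\in S_{n+1}\) and a depth-\(1\) set \(\MS\) --- this is precisely your transposition trick, left implicit there) with repeated unions permitted by the Feichtner--Kozlov building property; its inductive step then writes a depth-\((k+1)\) set as the union of the subfamily obtained by deleting level \(0\) and the subfamily obtained by deleting levels \(2,\dots,k\). You instead induct on cardinality via the decomposition \(\MS=(\MS\setminus\{A\})\cup(\MS\setminus\{B\})\), which collapses the whole problem in one stroke to two-element families, so that only the chains \(\{C_1\subsetneq C_2\}\) require the action; your explicit computation --- \(\sigma=(0\ p)\) with \(p\in C_2\setminus C_1\) fixes \(C_1\) and sends \(C_2\) to \((\{1,\dots,n\}\setminus C_2)\cup\{p\}\) --- agrees with the rule \(\sigma\cdot A=\sigma\overline{A}\) of Section \ref{sec:action1} (one can verify it against the \(A_3\) example), and the bound \(|(\{1,\dots,n\}\setminus C_2)\cup\{p\}|\geq 2\) indeed uses \(C_2\neq V\) as you note; the union step is legitimate because \(\MS_1\cup\MS_2=\MS\) is nested, so both pieces lie in a common \(\Pc(\A_j')\) where condition (a) applies. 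You also correctly negotiate the one real trap: the building structure must be read on families with \(V\) stripped (the paper's footnote), since otherwise \(\{V,C_1\}\) and \(\{V,C_2\}\) would always intersect in \(\{V\}\), unions alone would spuriously force all chains, and the theorem would trivialize with no role for the action. Your route buys brevity and a sharp localization of where \(S_{n+1}\) is indispensable (exactly the \(2\)-chains); the paper's depth induction is heavier but sets up the level/tree formalism that it reuses later (the same levels reappear in Section \ref{sec:combinatorialaction}).
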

\begin{proof}

Let us consider a building subset  \(\Gamma\) of  \(\Bc(n-1)\) that contains $\varphi(F^1({\mathcal N}'(\Cc_{A_{n-1}})))$  and is closed under the  \(S_{n+1}\) action. We will prove the claim by showing  that \(\Gamma=\Bc(n-1)\).
 

This can be done by induction on the depth of an element of \(\Bc(n-1)\),
which is defined in the following way: 
let \(\Tc\) be a \(\Fc_{A_{n-1}}\)-nested set that contains \(V\) and  consider the  levelled graph   associated to \(\Tc\). This graps is  an oriented tree: it coincides with the Hasse diagram of the poset induced by the  inclusion relation,  where  the leaves are the minimal subspaces of \(\MS\) and   the root
is  \(V\) and the orientation goes from the root to the leaves. 
A vertex $v$ is in level $k$ if the maximal length of a path that connects $v$ to a leaf is
$k$. 
We say that  \(\Tc\) has depth \(k\) if 
\(k\) is the  highest level of this tree. 
\footnote{We notice that this representation of a nested set by a tree is coherent with the one  introduced  in the Section \ref{secexamplean}: the only difference is that there we added  \(n\)  leaves.}

Now we prove by induction on \(k\) that every element in  \(\Bc(n-1)\) with depth \(k\) belongs to \(\Gamma\).

When \(k=0,1\) this is immediate: given \(B\neq V\in \Cc_{A_{{n-1}}} \), then \(\varphi(\{V, B\})\) is the nested set of depth 1 whose elements are \(V\) and the maximal elements of  \(\Fc_{{A_{n-1}}}\) contained in \(B\). In this way one can show that  all the elements of  \(\Bc(n-1)\)  with depth 1 that contain \(V\) belong to \(\Gamma\).

Let us check the case  \(k=2\).
One first observes that, in view of the definition of the  \(S_{n+1}\) action, every nested set of depth $2$ of the form \(\{V, B, B_1\}\), where \(B_1\subset B\), belongs to \(\Gamma\) since it
can be obtained as \(\sigma \MS\) for a suitable choice of \(\sigma\in S_{n+1}\) and of a the nested set of depth $1$ \(\MS\in \Bc(n-1)\). 
Now we show that also all  the nested sets of depth $2$ of the form \(\{V, B, B_1,...,B_j\}\), with $j \geq 2$ and \(B_i\subset B\) for every \(i\), belong to \(\Gamma\). In fact we can obtain \(\{V, B, B_1,...,B_j\}\) as a union, for every $i$, of the nested
sets \(\{V, B, B_i\}\) that belong to $\Gamma$ as remarked above.
Since all these sets have a nontrivial intersection $\{V,B\}$ and \(\Gamma \) is  building in the Feichtner-Kozlov sense (see Section \ref{sec:buildingcombinatorial}), this shows that \(\{V, B, B_1,...,B_j\}\) belongs to 
$\Gamma$.

Then let us consider a nested set of depth  $2$ \(\{V, B, B_1,...,B_j\}\) (\(j\geq 2 \) ), where
\begin{itemize}
\item[i)] $B$ is in level $1$;
\item[ii)] \(B_j\)  is  not  included in \(B\);
\item[iii)]$\,$ all the $B_i$'s are in level $0$.
\end{itemize}
This nested set is in \(\Gamma\)  since it can be obtained as a union  of the nested sets (with depth $1$) \(\MS_1=\{V, B_1,...,B_j\}\)  and \(\MS_2\), where \(\MS_2\) is any  nested subset of \(\{V, B, B_1,...,B_j\}\) with   depth $2$. We notice that \(\MS_1\) and \(\MS_2\)  are in \(\Gamma\), and have nonempty intersection, therefore their union belongs to  \(\Gamma \).

Now  we can show that in \(\Gamma\) there are all the  nested sets of depth $2$: if the set \(\{V, C_1,...C_s, B_1,...,B_j\}\) has depth $2$, where \(s\geq 2\) and the subspaces      \(C_1,...,C_s\)   are in level $1$, while the \(B_i\)'s are in level $0$,
we can obtain \(\{V, C_1,...C_s, B_1,...,B_j\}\) as the union of the nested sets \(\{V, C_i, B_1,...,B_j\}\) (for every \(i=1,..,s\)) that   have pairwise nonempty intersection and belong to \(\Gamma\), as we have already shown. 

Let us now consider   \(k\geq 2\) and  suppose that every nested set in \(\Bc(n-1)\) with depth  \(\leq k\) belongs to \(\Gamma\).  Let \(\Tc\) be a nested set of depth \(k+1\).
 Let us denote by \(\Tc_k\) the nested set obtained removing from \(\Tc\) the 
subspaces in level $0$:
it belongs to \(\Gamma\) by the inductive hypothesis. Then we consider the nested set \(\Tc'\) obtained removing from \(\Tc\) the levels \(2,..., k\): since \(\Tc'\)  has depth 2  it belongs to \(\Gamma\) again by the inductive hypothesis.
We observe that  \(\Tc_k\) and \(\Tc'\) have nonempty intersection, therefore their union \(\Tc\) belongs to \(\Gamma\).
 \end{proof}

%

\section{Supermaximal models and cohomology}
\subsection{The Model  \(Y_{\Bc(n-1)}\) is a Supermaximal Model}
\label{subsec:supermaximalsvelato}

We can     now answer  to the question,  raised in  Section \ref{sec:supermaximal}, about how to construct a model  that is `bigger' than the maximal model, admits the extended \(S_{n+1}\) action and is minimal with these properties. 

Let us state this in a more formal way. 
Let us consider  the poset  \(\Bc(n-1)\) that    indicizes  the  strata of the minimal model \(Y_{minA_{n-1}}\), and let us denote  by \({\mathcal L}\) the family of the 
models  obtained by blowing up all the  building subsets of these strata. We observe that \({\mathcal L}\) has a natural poset structure given by the relation
\(Y_{\G_1}\leq Y_{\G_2}\) if and only if   \(\G_1\subseteq \G_2\) (by Li's definition, this also means that there is a birational projection of \(Y_{\G_2}\) onto \(Y_{\G_1}\)). 

 Let us denote by \(\Tc\) the set the elements of \(\Bc(n-1)\) with  depth 1. 
From Theorem \ref{teo:listabuilding} and Remark \ref{rem:ordinescoppiamenti} it follows that if  we blowup in \(Y_{minA_{n-1}}\)  the strata  that correspond to the elements of \(\Tc\) (in a suitable order, i.e. first the strata with smaller dimension) we obtain the model \(Y_{maxA_{n-1}}\). 

\begin{definition}
The supermaximal model $Y_{supermaxA_{n-1}}$ associated with the root arrangement \(A_{n-1}\) is the minimal model \(Y_\Kc\) in the poset \({\mathcal L}\) that admits the \(S_{n+1}\) action and such that \(\Kc\supseteq \Tc\). 
\end{definition}

We notice that this last property means that the supermaximal $Y_{supermaxA_{n-1}}$ model admits a birational projection onto \(Y_{maxA_{n-1}}\).

As a consequence of Theorem \ref{teo:chiusoazionebuilding} we have proven the following result:
\begin{theorem}
The model \(Y_{\Bc(n-1)}\) is the supermaximal model associated with the root arrangement \(A_{n-1}\). 
\end{theorem}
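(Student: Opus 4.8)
The plan is to show that the statement is an immediate reformulation of Theorem \ref{teo:chiusoazionebuilding}, once the definition of the supermaximal model is unwound into purely combinatorial language. First I would recall that, by Li's description underlying the poset \({\mathcal L}\), a model \(Y_\Kc\) in \({\mathcal L}\) is specified by a building subset \(\Kc\subseteq \Bc(n-1)\), that the order relation \(Y_{\G_1}\leq Y_{\G_2}\) corresponds to the inclusion \(\G_1\subseteq \G_2\), and that ``\(Y_\Kc\) admits the \(S_{n+1}\) action'' means precisely that \(\Kc\) is closed under the action of \(S_{n+1}\) on \(\Bc(n-1)\) extended to \({\mathcal N}'(\Bc(n-1))\) as described at the beginning of Section \ref{sec:chiusuraazione}. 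With these translations, the defining property of \(Y_{supermaxA_{n-1}}\) becomes: \(\Kc\) is the minimal building subset of \(\Bc(n-1)\) that is closed under \(S_{n+1}\) and satisfies \(\Kc\supseteq \Tc\).

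The key step is then to identify \(\Tc\) with the set \(\varphi(F^1({\mathcal N}'(\Cc_{A_{n-1}})))\) appearing in Theorem \ref{teo:chiusoazionebuilding}. I would argue this as follows. An element of \(F^1({\mathcal N}'(\Cc_{A_{n-1}}))\) is a pair \(\{V,B\}\) with \(B\in \Cc_{A_{n-1}}\), \(B\neq V\), and by the explicit formula for \(\varphi\) recalled just before the theorem its image is \(\{V,A_{1},\ldots,A_{m}\}\), where the \(A_i\) are the irreducible summands of \(B\); this is a nested set of depth \(1\), hence lies in \(\Tc\). Conversely, every depth-\(1\) element of \(\Bc(n-1)\) has the form \(\{V,A_1,\ldots,A_m\}\) with the \(A_i\) pairwise disjoint irreducibles, so that their direct sum \(B=A_1\oplus\cdots\oplus A_m\) is an element of \(\Cc_{A_{n-1}}\) whose \(\Fc_{A_{n-1}}\)-decomposition is exactly \(\{A_1,\ldots,A_m\}\); thus the set equals \(\varphi(\{V,B\})\). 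This gives the equality \(\Tc=\varphi(F^1({\mathcal N}'(\Cc_{A_{n-1}})))\).

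Finally, I would invoke Theorem \ref{teo:chiusoazionebuilding} directly: the minimal building subset of \(\Bc(n-1)\) containing \(\varphi(F^1({\mathcal N}'(\Cc_{A_{n-1}})))=\Tc\) and closed under the \(S_{n+1}\) action is \(\Bc(n-1)\) itself. Since \(Y_{\Bc(n-1)}\) does carry the extended \(S_{n+1}\) action and satisfies \(\Bc(n-1)\supseteq \Tc\) (so that it projects birationally onto \(Y_{maxA_{n-1}}\)), it is an admissible candidate, and by the theorem it is the minimal one; hence \(Y_{supermaxA_{n-1}}=Y_{\Bc(n-1)}\). I do not expect a genuine obstacle here: essentially all the mathematical content sits in Theorem \ref{teo:chiusoazionebuilding}, and the only care required is the bookkeeping in the identification \(\Tc=\varphi(F^1({\mathcal N}'(\Cc_{A_{n-1}})))\) together with the faithful translation of the clause ``admits the \(S_{n+1}\) action and \(\Kc\supseteq\Tc\)'' into the hypotheses of that theorem.
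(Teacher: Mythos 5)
Your proposal is correct and takes essentially the same route as the paper, which offers no separate proof but states the theorem as an immediate consequence of Theorem \ref{teo:chiusoazionebuilding}, with the identification \(\Tc=\varphi(F^1({\mathcal N}'(\Cc_{A_{n-1}})))\) left implicit (it is in effect verified in the \(k=0,1\) step of that theorem's proof). Your explicit bookkeeping of that identification, and of the translation ``\(Y_\Kc\) admits the \(S_{n+1}\) action'' into ``\(\Kc\) is an \(S_{n+1}\)-stable building subset of \(\Bc(n-1)\)'', is exactly the intended content.
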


\begin{remark}
There is a family of \(S_n\)-invariant building sets that are intermediate between $\Fc_{A_{n-1}}$ and
$\Cc_{A_{n-1}}$
(these building sets have been classified in \cite{GaiffiServenti2}). 
Let $\Ec$
be such a building set and let \(Y_\Ec\) be the corresponding  model. 
We will denote by \(Y_{super\Ec}\) the minimal model in  \({\mathcal L}\) among the models \(Y_{\Kc}\) that  admit the \(S_{n+1}\)-action and such that \(\Kc\supseteq \Ec\). 
Depending on the choice of $\Ec$, it may be $Y_{super\Ec} \lneqq Y_{\Bc(n-1)}$. This happens for
instance when $\Ec$ is the  building set that contains $\Fc_{n-1}$
and all the triples $\{ V, A_1, A_2\} \in \Bc(n-1)$ such that the sum of $A_1$ and $A_2$ is direct and
has dimension $n-2$ (this building set is denoted by \(\G_2(A_{n-1})\) in  \cite{GaiffiServenti2}).
 \end{remark}

\subsection{The Cohomology of a Complex Supermaximal Model}
The discussion in the preceding sections  points out the interest of the supermaximal models and of the corresponding symmetric group actions. 

 Let \(\Fc\) be the  building set of irreducible subspaces associated with a subspace arrangement  in a complex vector space \(V\) of dimension \(n\).
 In Section \ref{sec:buildingcombinatorial} we defined the building set 
$\Bc(n-1) = \Nc'(\Fc_{A_{n-1}})$, the building set for the supermaximal model for $A_{n-1}$. By  analogy with this notation we write $\Bc(\Fc)$ for the building set $\Nc'(\Fc)$, that
generalizes in a way the idea of supermaximal model.

Let \(Y_{\Bc(\Fc)}\) be the model obtained by blowing up all the strata of the minimal model \(Y_{\Fc}\). 
We recall from Example \ref{es:boundary} that the 
strata of \(Y_{\Bc(\Fc)}\) are in bijection with the nested sets in $\Bc(\Fc)$, according to the constructions given in \cite{procesimacpherson, li}.

%
%

Let us denote by \(\upi^{\Bc(\Fc)}_\Fc\) the projection from \(Y_{\Bc(\Fc)}\) onto \(Y_\Fc\).

\begin{theorem}\label{thm:cohomology}
A basis of the  integer cohomology of the complex model \(Y_{\Bc(\Fc)}\) is given by the following monomials:
\[  \eta\ c_{\Sc_1}^{\delta_1}c_{\Sc_2}^{\delta_2}\cdots c_{\Sc_k}^{\delta_k}  \]
where 
\begin{enumerate}
\item  \(\Sc_1\subsetneq \Sc_2 \subsetneq\cdots \subsetneq \Sc_k\) is a chain of  \(\Fc\)-nested sets (possibly empty, i.e. \(k=0\)),  with \(\{V\}\subsetneq \Sc_1\);
\item  the exponents \(\delta_i\), for \(i=1,\ldots, k\),  satisfy the following inequalities: \(1\leq \delta_i \leq |\Sc_i|-|\Sc_{i-1}|-1\), where we put \(\Sc_0=\{V\}\);
\item \(\eta \) belongs to \((\upi^{\Bc(\Fc)}_\Fc)^*H^*(\Dc_{\Sc_1})\) (if \(k\geq 1\)) or to \((\upi^{\Bc(\Fc)}_\Fc)^*H^*(Y_\Fc)\) (if \(k=0\))
and is the image, via $(\upi^{\Bc(\Fc)}_\Fc)^*$, of a monomial in the  Yuzvinsky basis (see Section \ref{constru});
\item the element $c_{\Sc_i}$ is the Chern class of the normal bundle of $L_{\Sc_i}$ (the proper transform of  $\Dc_{\Sc_i}$) in \(Y_{\Bc(\Fc)}\).
\end{enumerate}
\end{theorem}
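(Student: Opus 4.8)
The plan is to realize $Y_{\Bc(\Fc)}$ as an iterated blowup of the minimal model $Y_\Fc$ and to compute its cohomology additively through repeated application of the blowup formula, keeping track of how the new classes organize themselves into the monomials of the statement. First I would fix, as permitted by Theorem \ref{teo:listabuilding} and Remark \ref{rem:ordinescoppiamenti}, an ordering of the boundary strata $\Dc_\Sc$ of $Y_\Fc$ (indexed by the $\Fc$-nested sets $\Sc\supsetneq\{V\}$) by non-increasing dimension, so that every initial segment is a building set; blowing these up in this order produces $Y_{\Bc(\Fc)}$ together with a chain of projections whose composite is $\upi^{\Bc(\Fc)}_\Fc$. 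Only the strata of codimension $\geq 2$, that is $|\Sc|\geq 3$, are genuinely modified, which is already consistent with the fact that the exponent range in item (2) is empty as soon as $|\Sc_i|-|\Sc_{i-1}|=1$.

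At each elementary step $Y'=Bl_{L}Y''$, where $L=L_\Sc$ is the proper transform of some $\Dc_\Sc$ of codimension $d=|\Sc|-1$, the blowup formula gives an additive decomposition $H^*(Y')\cong \pi^*H^*(Y'')\oplus\bigoplus_{j=1}^{d-1} c_\Sc^{\,j}\cdot\pi^*H^*(L)$, the new summands being powers of the class $c_\Sc$ of the exceptional divisor (equivalently of the normal bundle of $L_\Sc$) times pullbacks from the center. Iterating, every cohomology class of $Y_{\Bc(\Fc)}$ is a combination of terms $\eta\cdot\prod_i c_{\Sc_i}^{\delta_i}$. Here the family $\{\Sc_1,\dots,\Sc_k\}$ occurring in a nonzero monomial must be $\Bc(\Fc)$-nested: two boundary divisors $\Dc_{\Sc}$, $\Dc_{\Sc'}$ meet precisely when $\{\Sc,\Sc'\}$ is $\Bc(\Fc)$-nested, and by Example \ref{es:boundary} the nested sets of $\Bc(\Fc)$ are exactly the chains $\{V\}\subsetneq\Sc_1\subsetneq\cdots\subsetneq\Sc_k$ of $\Fc$-nested sets. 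This gives condition (1) and identifies $\eta$ as living on the stratum $\Dc_{\Sc_1}$, i.e. as the $\upi^{\Bc(\Fc)}_\Fc$-pullback of a class of $H^*(\Dc_{\Sc_1})$ (or of $H^*(Y_\Fc)$ when $k=0$), for which Theorem \ref{base coomologia} furnishes the Yuzvinsky basis of item (3).

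The heart of the argument is the determination of the exponent bounds of item (2). A single blowup of $\Dc_{\Sc_i}$ in $Y_\Fc$ would only force $1\leq\delta_i\leq |\Sc_i|-2$, coming from the $\Proj^{|\Sc_i|-2}$-fibre of its exceptional divisor; the sharper bound $\delta_i\leq|\Sc_i|-|\Sc_{i-1}|-1$ appears only on the stratum where the earlier divisor $\Dc_{\Sc_{i-1}}$ (with $\Sc_{i-1}\subsetneq\Sc_i$) is already present. I would extract it from the Chern relation of the projective bundle defining the relevant exceptional divisor: along $\Dc_{\Sc_{i-1}}\cap\Dc_{\Sc_i}$ the normal directions split off those already used by $c_{\Sc_{i-1}}$, so that $c_{\Sc_i}$ restricts to the hyperplane class of a projective space of dimension $|\Sc_i|-|\Sc_{i-1}|-1$, and its higher powers are rewritten, via the self-intersection relations in $H^*(Y_{\Bc(\Fc)})$, in terms of monomials with strictly smaller $\delta_i$ and extra factors absorbed into $\eta$ or into $c_{\Sc_{i-1}}$. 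Equivalently, this is the admissible-monomial (Yuzvinsky-type) basis for the wonderful model with building set $\Bc(\Fc)$, the quantity $|\Sc_i|-|\Sc_{i-1}|-1$ being the combinatorial relative dimension analogous to $d^{\Sc}_{\Hc,B}$ attached to the pair $\Sc_{i-1}\subset\Sc_i$.

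I expect this exponent computation to be the main obstacle: it requires the precise self-intersection formulas for the nested family of exceptional divisors, together with a careful check that the relations allow one to reduce exactly to the stated range and to nothing smaller. Once the bounds are in place, the final step is bookkeeping: the proposed monomials span $H^*(Y_{\Bc(\Fc)})$ by the iterated blowup formula, and they are linearly independent because their number, summed over all chains and over the Yuzvinsky bases of the strata, matches the Poincaré polynomial predicted by the same formula, a dimension count I would carry out by induction on $\dim V$ using the factorization of each boundary divisor $\Dc_{\Sc_1}$ into smaller wonderful models. This shows that the displayed monomials form a $\Z$-basis.
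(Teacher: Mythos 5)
Your architecture is the same as the paper's: realize $Y_{\Bc(\Fc)}$ as an iterated blowup of $Y_\Fc$ along its strata in an order making every initial segment building, apply a blowup formula at each step, and organize the new classes into the monomials of the statement. (One slip to fix: by Remark \ref{rem:ordinescoppiamenti} the order must be by \emph{increasing} dimension, smallest strata first; with ``non-increasing dimension'' as you wrote it the initial segments are not building and Theorem \ref{teo:listabuilding} does not apply.) But at the decisive point --- the sharp bound $\delta_i \leq |\Sc_i|-|\Sc_{i-1}|-1$ and the linear independence --- your plan is not carried out. You propose to derive the bound by rewriting higher powers of $c_{\Sc_i}$ via self-intersection relations and to get independence from a Betti-number count, and you yourself flag this as the unresolved ``main obstacle''. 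That is a genuine gap: you never verify that the relations reduce exactly to the stated range and no further, and the dimension count is circular as stated, since knowing the Betti numbers at each stage of the tower requires already knowing the cohomology of all the blowup centers in the stated form.

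The idea that closes this gap in the paper is structural rather than computational. Blowing up in order of increasing dimension, when the turn of $\Dc_\Sc$ (with $|\Sc|=d$) arrives, all strata $\Dc_\Tc$ with $|\Tc|>d$ have already been blown up, and by the tangency condition of Definition \ref{def:simple} these blowups restrict to blowups of the strata $\Dc_\Tc\cap\Dc_\Sc$ of $\Dc_\Sc$; hence the center, the proper transform $\Dc_\Sc'$, is \emph{itself a supermaximal model of smaller dimension}. A second induction, on dimension, therefore gives the basis of $H^*(\Dc_\Sc')$ in the stated form, with the bounds $\delta_1\leq |\Sc_1|-|\Sc|-1$ already built in, because there $c_{\Sc_1}$ is the Chern class of the normal bundle of $\Dc_{\Sc_1}'$ in $\Dc_\Sc'$, of rank $|\Sc_1|-|\Sc|$. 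Keel's Lemma \ref{lem:keel} then yields in one stroke the new classes $c_\Sc^\delta$ with $1\leq\delta\leq |\Sc|-2$ (the degree of the Chern polynomial of the normal bundle) and a parametrization of a $\Z$-basis of the next stage by old monomials together with $\eta\, c_\Sc^{\delta}c_{\Sc_1}^{\delta_1}\cdots c_{\Sc_k}^{\delta_k}$ where the lower factor runs over the basis of $H^*(\Dc_\Sc')$ --- spanning and independence simultaneously, with no rewriting and no counting. Two compatibility checks, which your phrase ``absorbed into $\eta$'' glosses over, are also required and are done in the paper: the restriction $H^*(X_n)\to H^*(\Dc_\Sc')$ sends $\zeta_\Tc$ to $\zeta_\Tc^\Sc$ (a Thom-class argument), so basis monomials of the center are represented by cosets of global monomials modulo the kernel $J$; and under each later blowup the class $c_\Tc$ pulls back to the Chern class of the proper transform (functoriality), so the notation is stable along the whole tower. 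Your instinct about the geometric source of the bound --- the $\mathbb{P}^{|\Sc_i|-|\Sc_{i-1}|-1}$-fibres of the relevant projectivized normal bundle --- is correct, but without the double induction it remains an expectation, not a proof.
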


\begin{proof}
Let us fix some notation.
We construct \(Y_{\Bc(\Fc)}\) starting from \(Y_\Fc\) by choosing a sequence of blowups (we start by blowing up the 0-dimensional strata, then the 1-dimensional strata, and so on, see Remark \ref{rem:ordinescoppiamenti}). At a certain step of this blowup process we have blown up a  set of strata indicized by a subset \(\A\) of \(\Bc(\Fc)\). We then denote by \(Y_\A\) the variety we have obtained. 
  This means that \(\A\) has  the following property: there exists an integer \(k\), with  \(2\leq k\leq n+1\) such that \(\A\)  contains all the nested sets with cardinality \(\geq k+1\) and doesn't contain any nested set with cardinality \(<k\).
\begin{remark}
According to this notation, the variety \(Y_\Fc\) can also be denoted by \(Y_{\emptyset}\), i.e., it is the variety we have when no stratum has been blown up.
\end{remark}

As a further notation, if \(Y_{\A_1}\) and \(Y_{\A_2}\) are  two varieties  obtained during the blowup process with \(\A_1\subset \A_2\subset \Bc(\Fc)\),  we denote by \(\pi^{\A_2}_{\A_1}\: : \:  Y_{\A_2}\rightarrow Y_{\A_1} \) the blow up map. Let us recall the following lemma.

\begin{lemma}[Keel, \cite{keel}] \label{lem:keel}
Given \(\A\) as before, let us suppose that the next stratum that we have to blow up is indicized by \(\Sc\in \Bc(\Fc)\setminus  \A\)  with \(|\Sc|=k\). We call $\Dc_\Sc'$ the proper transform of $\Dc_\Sc$ in $Y_\A$ and
$\widetilde{\Dc_\Sc}$ the proper transform of $\Dc_\Sc$ in $Y_{\A \cup \{\Sc\}}.$ Then
\[H^*(Y_{\A\cup \{\Sc\}})\cong  
H^*(Y_{\A})[\zeta] /(J \cdot \zeta, P_{Y_\A/\Dc_\Sc'}(-\zeta)) 
\]
where $\zeta$ is the class of the proper transform
\(\widetilde{\Dc_\Sc}\) 
of \(\Dc_\Sc\) in $H^2(Y_{\A\cup \{\Sc\}}),$
$J$ is the kernel of the restriction map $H^*(Y_\A) \to H^*(\Dc_\Sc')$ and
$P_{Y_\A/\Dc_\Sc'}[x]$ is any polynomial in $H^*(Y_\A)[x]$ whose restriction to $H^*(\Dc_\Sc')[x]$ is the Chern polynomial of the
normal bundle of 
$\Dc_\Sc'$ in $Y_\A.$
\end{lemma}

Using the lemma we can prove the claim of the theorem by induction. We fix a sequence of blowups 
$$X_0 \ot^\pi X_1 \ot^\pi \cdots \ot X_N$$ that constructs \(Y_{\Bc(\Fc)}=X_N\) starting from \(Y_\Fc = X_0\). 
We assume by inductive hypothesis that a basis of the integer cohomology of $X_n$ is given by the monomials 
\[  \eta\ c_{\Sc_1}^{\delta_1}c_{\Sc_2}^{\delta_2}\cdots c_{\Sc_k}^{\delta_k}  \]
where 
\begin{enumerate}
  \item \(\Sc_1\subsetneq \Sc_2 \subsetneq\cdots \subsetneq \Sc_k\) is a possibly empty chain of  \(\Fc\)-nested sets already blown up;
  \item \(\eta \) belongs to \((\pi^{X_n}_{X_0})^*\iota^*H^*(\Dc_{\Sc_1})\) if \(k\geq 1\) ($\iota$ is the inclusion map of $\Dc_{\Sc_i}$ in $X_0$), and to \((\pi^{X_n}_{X_0})^*H^*(X_0)\) if \(k=0\) and is the image of a monomial in the  Yuzvinsky basis;
  \item the term $c_{\Sc_i}$ is the Chern class of the normal bundle of $\Dc'_{\Sc_i}$, that is the proper transform of $\Dc_\Sc$ in $X_n;$
  \item the exponents $\delta_i$ satisfy the inequalities \(1\leq \delta_i \leq |\Sc_i|-|\Sc_{i-1}|-1\), where we put \(\Sc_0=\{V\}.\)
\end{enumerate}

Let $\Dc_\Sc'$ be the stratum in $X_n$ that we have to blowup in order to get $X_{n+1}.$
We assume that  ${\Dc_\Sc'}$ is the proper transform in $X_n$ of the stratum $\Dc_\Sc$ in $X_0,$
corresponding to the $\Fc$-nested set $\Sc \in \Bc(\Fc).$ Let $d= | \Sc |.$
Since at the step $X_n$ we have already blown up all the strata $\Dc_\Tc$ with $| \Tc |> d,$ the submanifold
$\Dc_\Sc'$ is a complex supermaximal model of smaller dimension. 
In fact, since we have that, as in Definition \ref{def:simple}, 
$$T(\Dc_\Sc \cap \Dc_\Tc) = T(\Dc_\Sc)_{\Dc_\Sc \cap \Dc_\Tc} \cap T(\Dc_\Tc)_{\Dc_\Sc \cap \Dc_\Tc},$$ 
the stratum  
$\Dc_\Sc'$ can be obtained as a blowup of
$\Dc_\Sc$ along the strata $\Dc_\Tc \cap \Dc_\Sc$ with $| \Tc |> d.$ 
Hence we can assume by induction on the dimension of $\Dc_\Sc'$, that we know the cohomology $H^*(\Dc'_\Sc),$ according to the statement of the theorem.

Then, by Lemma \ref{lem:keel} we have that the cohomology of $X_{n+1}$ is given by

\[H^*(X_{n+1})\cong  
H^*(X_n)[\zeta] /(J \cdot \zeta, P_{X_n/\Dc_\Sc'}(-\zeta)) 
\]
where $J$ is the kernel of the projection induced by the inclusion of $\Dc_\Sc'$ in $X_n$
and $\zeta = c_{\Sc}$ is the Chern class of the normal bundle of the proper transform $\widetilde{\Dc_\Sc}$ of $\Dc_\Sc$ in $X_{n+1}.$ 
The polynomial $P_{X_n/\Dc_\Sc'}(-\zeta)$ is the Chern polynomial of the normal bundle of $\Dc_\Sc'$ in $X_n,$ that has rank $d =|\Sc|$
and hence $\deg P_{X_n/\Dc_\Sc'}(-\zeta) = d - 1.$ 

It follows that a basis of $H^*(X_{n+1})$ is given by the union of two set of generators:
\begin{enumerate}
  \item the monomials
\[ \mu_1= \eta\ c_{\Sc_1}^{\delta_1}c_{\Sc_2}^{\delta_2}\cdots c_{\Sc_k}^{\delta_k}  \]
that are already in the base of $H^*(X_n)$ and that we can identify with the
corresponding generators of the cohomology of $X_{n+1}$ via the pull back
$\pi^*: H^*(X_n) \to H^*(X_{n+1});$ 
\item the monomials
\[ \mu_2 = \eta\ c_\Sc^{\delta} c_{\Sc_1}^{\delta_1}c_{\Sc_2}^{\delta_2}\cdots c_{\Sc_k}^{\delta_k} \]
where $0 < \delta < |\Sc|-1$ and $\eta\ c_{\Sc_1}^{\delta_1}c_{\Sc_2}^{\delta_2}\cdots c_{\Sc_k}^{\delta_k}$ is a monomial among the generators of the cohomology 
$H^*(\Dc_\Sc');$ we identify $\mu_2$ with a generator of the cohomology of $H^*(X_{n+1})$ as follows:
given the monomial $\eta\ c_{\Sc_1}^{\delta_1}c_{\Sc_2}^{\delta_2}\cdots c_{\Sc_k}^{\delta_k} \in H^*(\Dc_\Sc'),$
we identify it with the coset $\eta\ c_{\Sc_1}^{\delta_1}c_{\Sc_2}^{\delta_2}\cdots c_{\Sc_k}^{\delta_k}+J \in H^*(X_n)$ by the projection induced by the inclusion of $\Dc_\Sc'$ in $X_n;$ hence when we multiply it by $c_\Sc,$ that is the Chern class of the normal bundle of the divisor $\widetilde{\Dc_\Sc} \in X_{n+1},$ this gives a well defined class \[\eta\ c_\Sc^{\delta} c_{\Sc_1}^{\delta_1}c_{\Sc_2}^{\delta_2}\cdots c_{\Sc_k}^{\delta_k} \in H^*(X_{n+1}).\]  
In particular, since we are considering the class
$c_{\Sc_1}$ that is the Chern class of the normal bundle of the proper transform $\Dc_{\Sc_1}'$ of $\Dc_{\Sc_1}$ in $\Dc_\Sc'$, the exponent 
$\delta_1$ will be at most $|\Sc_1| - |\Sc| -1,$ that is the dimension of the projectivized normal bundle
of $\Dc_{\Sc_1}'$ in $\Dc_\Sc'.$ 
\end{enumerate}

Let \(\Sc, \Tc\), with $\Sc \subset \Tc$ be two $\Fc$-nested sets.   Let $\Dc_\Sc'$ (resp. $\Dc_\Tc'$) be the proper transform of $\Dc_\Sc$ (resp. $\Dc_\Tc$) in $X_n.$ 
We assume 
that in $X_n$ we have already performed the blowup of the stratum associated to $\Dc_\Tc.$
Since we have the inclusion $\Dc_\Tc \subset \Dc_\Sc,$ we can also consider the proper transform $\Dc_\Tc' \cap \Dc_\Sc'$
of $\Dc_\Tc$ in $\Dc_\Sc'$.
 
Let $\zeta_\Tc$ be the Chern class of the normal bundle of $\Dc_\Tc'$ in $X_n$ and let $\zeta_\Tc^\Sc$ be
the Chern class of the normal bundle of $\Dc_\Tc'\cap \Dc_\Sc'$ in $\Dc_\Sc'.$
We claim that the projection $i^*$ induced by the inclusion $i:\Dc_\Sc' \subset X_n$ maps
$i^*:\zeta_\Tc \mapsto \zeta_\Tc^\Sc.$
This follows since the Thom class $\tau_\Tc$ of the normal bundle of $\Dc_\Tc'$ in $X_n$ restricts to the Thom class $\tau_\Tc^S$ of the normal bundle of $\Dc_\Tc'\cap \Dc_\Sc'$ in $\Dc_\Sc$ and we have that the Chern class of a line bundle is the pullback of the zero-section of its Thom class.

Finally we notice that, since Chern classes are functorial, whenever we perform a blowup $\pi:X_{n+1} \to X_n$, the Chern class $c_\Tc$ of a divisor $\Dc_\Tc'$ in $X_n$ pulls back to the Chern class of its proper transform $\widetilde{\Dc_\Tc}$  and hence we can identify them in our notation. 

\end{proof}

%

\section{Poincar\'e series and Euler characteristic of supermaximal models
}
\label{sec:poincaresupermodels}
In this section we use  the cohomology basis described in Theorem \ref{thm:cohomology} to prove   a formula for a series that encodes all the information regarding the Poincar\'e polynomials of the supermaximal models \(Y_{\Bc(n-1)}\).

We start by recalling the analogue computation in the case of minimal models.
The  Poincar\'e series \[\Phi(q,t)=t+ \sum_{n\geq 2}\sum_i dim \ H^{2i}(Y_{minA_{n-1}}, \Z) q^i \frac{t^n}{n!}\] for the minimal De Concini-Procesi models \(Y_{minA_{n-1}}\)  has been computed in many different ways.  
One can see for instance  \cite{Manin}, \cite{YuzBasi}, \cite{GaiffiBlowups},  \cite{getzler};  a formula for another series that encodes the same information   is provided   in Section \ref{ss:action3} of the present paper. 
 
Both in   Section 5 of \cite{YuzBasi} and  Section 4 of \cite{GaiffiBlowups} the computation of \(\Phi(q,t)\) consists in counting  the elements of  the Yuzvinski basis for \(H^{*}(Y_{minA_{n-1}}, \Z)\) described in Section \ref{constru}. 
We recall that, given  a monomial \(m_f\) in this basis, the set \(supp \ f\) is a nested set and that in  Section \ref{secexamplean} we represented the $\Fc_{A_{n-1}}$- nested sets, i.e. the elements of \(\Nc( \Fc_{A_{n-1}}), \) as oriented forests on \(n\) leaves. Let us then denote by \(\lambda(q,t)\)  the contribution  to \(\Phi(q,t)\)  provided by the basis monomials whose associated nested set is represented by a tree.

It turns out that \(\lambda(q,t)\)  satisfies the following recursive relation: 
\begin{displaymath}
	\lambda(q,t) ^{(1)}=1 + \frac{\lambda(q,t) ^{(1)}}{q-1}
	\left[e^{q\lambda(q,t) }-qe^{\lambda(q,t) }+q-1    \right]
\end{displaymath}
(here the superscript \(^{(1)}\) means the first derivative with respect to \(t\)).
Then one obtains a formula for   \(\Phi(q,t)\) by observing that   \(\Phi(q,t)= e^{\lambda(q,t) }-1\).


Let us now denote by \(\Phi_{super}(q,t)\) the  Poincar\'e series: 
\[\Phi_{super}(q,t)=t+ \sum_{n\geq 2} \sum_i dim \ H^{2i}(Y_{\Bc(n-1)}, \Z) q^i \frac{t^n}{n!}\]
\begin{definition}
We define the following series in four variables: 
\begin{equation}
\label{formulacsi}
 \xi(t,q,y,z)=\Phi(q,t) + \sum_{
\begin{array}{c}
 n \geq 2  \\
\MS\in \Nc'( \Fc_{A_{n-1}}) \\
 |\MS|=\ell +1 > 1   
\end{array}}\sum_{r\geq 0, k\geq 0}dim \ H^k(\Dc_\MS)N_{r,\MS} \ y^\ell z^r q^k\frac{t^n}{n!}
\end{equation}
where   \(\Dc_\MS\) is the subvariety in the boundary of  \(Y_{min A_{n-1}}\) defined in Section \ref{constru}  
%
and  \(N_{r,\MS}\) is the number of nested sets in \(\Nc'( \Fc_{A_{n-1}})\)   that contain \(\MS\) and whose cardinality is \(|\MS|+ r\).

\end{definition}
\begin{theorem}
\label{teo:sostituzioni}
One can obtain  the Poincar\'e series \(\Phi_{super}(q,t)\)  from the series  \(\xi(t,q,y,z)\)  by substituting:
\begin{itemize}

\item  \(y^\ell\)  with \(\frac{q^\ell-q}{q-1}\); 
\item  \(z^r\)   with \[\sum_{s\leq r, \;  j_0=0< j_1< \cdots  < j_s=r} \frac{r!}{j_1!(j_2-j_1)!\cdots (j_s-j_{s-1})!}\prod_{\theta=1}^{s}q\frac{q^{j_{\theta}-j_{\theta-1}-1}-1}{q-1}\]

\end{itemize}
\end{theorem}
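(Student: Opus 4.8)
The plan is to read both $\Phi_{super}(q,t)$ and the substituted series off the explicit cohomology basis of Theorem~\ref{thm:cohomology}, specialized to $\Fc=\Fc_{A_{n-1}}$, and to recognize the two prescribed substitutions as the generating-function bookkeeping for the two independent ``free'' choices in that basis. By definition $\Phi_{super}(q,t)$ is the series enumerating the basis monomials $\eta\, c_{\Sc_1}^{\delta_1}\cdots c_{\Sc_k}^{\delta_k}$ of Theorem~\ref{thm:cohomology}, graded by complex cohomological degree and weighted by $t^n/n!$. First I would peel off the monomials with $k=0$: these are exactly the pullbacks of the Yuzvinsky basis of $H^*(Y_{minA_{n-1}})$, so they contribute precisely $\Phi(q,t)$, which is the leading term of $\xi$ and is left untouched by the substitutions (it contains neither $y$ nor $z$). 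All remaining work concerns the monomials with $k\geq 1$.

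For $k\geq 1$ I would fix the smallest nested set $\Sc_1=\MS$ of the chain, an element of $\Nc'(\Fc_{A_{n-1}})$ with $|\MS|=\ell+1>1$, and split the contribution of all basis monomials having this $\Sc_1$ into three independent factors. The factor $\eta$ ranges over the Yuzvinsky basis of $H^*(\Dc_\MS)$ and so contributes $\sum_{j}\dim H^{j}(\Dc_\MS)\,q^{j}$, which is exactly the cohomological weight attached to $\MS$ in $\xi$. The exponent $\delta_1$ of $c_\MS$ runs, by condition (2) of Theorem~\ref{thm:cohomology}, over $1\leq \delta_1\leq |\MS|-2=\ell-1$, hence contributes $\sum_{\delta_1=1}^{\ell-1}q^{\delta_1}=\frac{q^\ell-q}{q-1}$, precisely the substitution prescribed for $y^\ell$. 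This already reproduces, summand by summand, the cohomology-times-$y^\ell$ part of the $\MS$-contribution to $\xi$, and reduces the theorem to identifying the third factor — the sum over the remaining chain $\MS=\Sc_1\subsetneq \Sc_2\subsetneq\cdots\subsetneq\Sc_k$ together with the exponents $\delta_2,\dots,\delta_k$ — with $\sum_{r\geq 0}N_{r,\MS}\,z^r$ after the $z$-substitution.

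The core of the argument is then a purely combinatorial count of this third factor. I would organize the chains lying above $\MS$ by their top element $\Tc=\Sc_k$ and by $r=|\Tc|-|\MS|$. Here I use the fact recorded in Section~\ref{secexamplean} that an $\Fc_{A_{n-1}}$-nested set is exactly a laminar family of subsets, so that \emph{every} subset of the nested set $\Tc$ is again nested; consequently the chains $\MS=\Sc_1\subsetneq\cdots\subsetneq\Sc_k=\Tc$ are in bijection with the ordered set partitions of the $r$-element set $\Tc\setminus\MS$ into $s=k-1$ nonempty blocks, the $\theta$-th block being $\Sc_{\theta+1}\setminus\Sc_\theta$. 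For a chain with block sizes $m_\theta=|\Sc_{\theta+1}|-|\Sc_\theta|$ the admissible exponents satisfy $1\leq\delta_{\theta+1}\leq m_\theta-1$ and contribute $\prod_{\theta}q\,\frac{q^{m_\theta-1}-1}{q-1}$, while the number of ordered set partitions with these block sizes is the multinomial coefficient $\frac{r!}{m_1!\cdots m_s!}$. Summing over all compositions $m_1+\cdots+m_s=r$ and re-indexing by the partial sums $j_\theta=m_1+\cdots+m_\theta$, so that $j_0=0<j_1<\cdots<j_s=r$ and $m_\theta=j_\theta-j_{\theta-1}$, yields exactly the closed form prescribed for the $z^r$-substitution. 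Since this sum depends only on $r$ and the number of admissible top elements $\Tc$ with $|\Tc|=|\MS|+r$ is by definition $N_{r,\MS}$, the third factor equals $\sum_{r}N_{r,\MS}\,z^r$ after substitution, and the match with $\xi$ is complete.

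I expect the main obstacle to be the combinatorial identity of the last paragraph: one must check that every ordered set partition of $\Tc\setminus\MS$ really does produce an admissible chain of nested sets — this is exactly where the laminarity of type $A$ nested sets is indispensable, since for a general building set $\Fc$ an intermediate subset need not be nested and the count would not factor — and that the partial-sum re-indexing transforms the multinomial sum over compositions into the stated expression, including the boundary case $r=0$ (where $\Tc=\MS$, $s=0$, and the empty product correctly gives $z^0\mapsto 1$). I would also note in passing that a block of size $m_\theta=1$ contributes the vanishing factor $q\frac{q^{0}-1}{q-1}=0$, so that chains with a singleton step automatically drop out, in agreement with the exponent constraint $\delta_{\theta+1}\geq 1$. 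By contrast, the factorization of the contribution for a fixed $\Sc_1$ and the degree bookkeeping that assigns the weight $q^{\delta_i}$ to each $c_{\Sc_i}^{\delta_i}$ are routine once the basis of Theorem~\ref{thm:cohomology} is in hand.
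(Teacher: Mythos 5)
Your argument is correct and is essentially the paper's own proof in expanded form: the paper likewise splits off the $k=0$ terms as \(\Phi(q,t)\), reads the \(y^\ell\) substitution from the bound \(1\leq\delta_1\leq\ell-1\) with \(\ell=|\MS_1|-1\), and asserts (without writing out the count) that the \(z^r\) substitution accounts for all monomials with \(|\MS_k-\MS_1|=r\), which is exactly the ordered-set-partition computation you carry out in detail, including the vanishing of singleton blocks and the \(r=0\) case. One small correction to your closing remark: laminarity is not actually indispensable there, since for an arbitrary De Concini--Procesi building set any antichain in a subset of a nested set is an antichain in the nested set itself, so every subset of a nested set is again nested and your bijection between chains from \(\MS\) to \(\Tc\) and ordered set partitions of \(\Tc\setminus\MS\) (hence the factorization of the count) goes through for general \(\Fc\), not only in type \(A\).
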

\begin{proof}
The monomials in the basis of   \(H^*(Y_{\Bc(n-1)})\) (\(n\geq 2\)) are described by  Theorem \ref{thm:cohomology}: 
\[  \eta\ c_{\MS_1}^{\delta_1}c_{\MS_2}^{\delta_2}\cdots c_{\MS_k}^{\delta_k}  \]
where \(\eta\) is  represented by a monomial in the basis of  \( H^*(\Dc_{\MS_1})\) if \(k\geq 1\) and of \( H^*(Y_{min A_{n-1}})\) if \(k=0\).
The monomials with \(k=0\) are computed by the series \(\Phi(q,t)\) that is the first addendum in the formula (\ref{formulacsi}). 
Then we observe that, once \(\MS_1\) is fixed, if  the  exponent of the variable \(z\) is \(r\) this means that we are keeping into account all the monomials such that \(|\MS_k-\MS_1|=r\). This is expressed by the substitution formula  for \(z^r\). 

The exponent of the variable \(y\) coincides with \(|\MS_1-\{V\}|\) and  the exponent  \(\delta_1\) satisfies \(1\leq \delta_1\leq |S_1-\{V\}|-1\): this is expressed by the substitution of \(y^\ell\)  with \(\frac{q^\ell-q}{q-1}\) (notice that \(\ell>1\)).

\end{proof}

We are therefore interested in  finding a formula for \(\xi(t,q,y,z)\).
Let us denote by  \(p_n(q)\), for \(n\geq 2\),  the Poincar\'e polynomial of the minimal model \(Y_{minA_{n-1}}\), i.e. we can write    \(\Phi(q,t)=t+ \sum_{n\geq 2}p_n(q)\frac{t^n}{n!}\).
Then we consider the series  \(W(t,z)=\sum_{n\geq 2}w_n(z) t^n\)  where the polynomials \(w_n(z)\) count the number of elements of $\Bc(n-1)$. More precisely  \[w_n(z)=\sum_{1\leq j\leq n-1}\sum_{
\begin{array}{c}
  \MS\in \Bc(n-1)    \\
  |\MS|=j    
\end{array}
}z^{j-1}\]
As we will  recall in Section \ref{sec:combinatorialaction},
we have 
\[w_n(z)=\sum_{1\leq j\leq n-1}|\Pc_2(n+j-1,j)|z^{j-1}\]
where we  denote by \(\Pc_2(n+j-1,j)\)  the number of unordered partitions of the set \(\{1,..., n+j-1\}\) into \(j\) parts of  cardinality greater than or equal to 2 (these numbers are  the 2-associated Stirling numbers of the second kind, see  for instance the table at  page 222 of \cite{comtet}). 

If we now put  \[\psi(t,q,z)=\sum_{n\geq 2}p_n(q)w_n(z)\frac{t^n}{n!}      \]
we can write a formula that computes  the series \(\xi(t,q,y,z)\):
\begin{theorem}
\label{teo:formulapoincare}
We have 
\[  \xi(t,q,y,z)= \Phi(q,t)+ \sum_{\nu\geq 1}  \psi(t,q,z)^{(\nu)}  \frac{\Gamma^\nu}{\nu !} \]
where 
\[\Gamma=\sum_{\ell\geq 1}\frac{1}{\ell !}y^\ell (\psi(t,q,z)^\ell)^{(\ell-1)}\]
and  the superscript \(^{(j)}\) means the \(j\)-th derivative with respect to \(t\).
\end{theorem}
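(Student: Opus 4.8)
The plan is to read the series $\xi$ as an exponential generating function over the \emph{trees} that represent the $\Fc_{A_{n-1}}$-nested sets (as in Section \ref{secexamplean}), and then to recognise the stated formula as the Lagrange reversion of a single functional equation. Throughout, a nested set $\MS\in\Nc'(\Fc_{A_{n-1}})$ on $\{1,\dots,n\}$ is viewed as a rooted tree with root $V$, leaves $1,\dots,n$, and internal (non-leaf) vertices $V$ together with the elements of $\MS$; write $d_v\ (\geq 2)$ for the number of children of an internal vertex $v$, and let $\ell$ be the number of \emph{non-root} internal vertices, so that $y^{\ell}=y^{|\MS|-1}$.

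First I would record two multiplicativity facts. (1) By the De Concini--Procesi description of the boundary, $\Dc_\MS\cong\prod_v Y_{minA_{d_v-1}}$, the product over internal vertices $v$, each factor being $\overline{M}_{0,d_v+1}$ (the extra marked point is the attaching edge to the parent, resp.\ the global marked point at the root); by K\"unneth its Poincar\'e polynomial is $\prod_v p_{d_v}(q)$. (2) A nested set $\Tc\supseteq\MS$ is obtained by \emph{independently} refining the children of each internal vertex $v$: the new subspaces lying directly under $v$ form an element of $\Nc'(\Fc_{A_{d_v-1}})$ whose top is identified with the already-present vertex $v$, the new vertices being its remaining elements. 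Hence $\sum_r N_{r,\MS}\,z^r=\prod_v w_{d_v}(z)$, where $z$ tracks the number of new vertices; this is exactly the count $w_{d_v}$ recalled in Section \ref{sec:combinatorialaction}.

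Combining (1) and (2), the inner sum in (\ref{formulacsi}) becomes $\prod_v p_{d_v}(q)w_{d_v}(z)$, so that $\xi-\Phi$ is the EGF of such trees with $\ell\geq 1$, each internal vertex weighted by $p_{d_v}(q)w_{d_v}(z)$ and each non-root internal vertex carrying an extra factor $y$. Writing $\psi=\psi(t,q,z)=\sum_{n\geq 2}p_n(q)w_n(z)\frac{t^n}{n!}$ for the weight of a single internal vertex all of whose children are leaves, standard species composition (a subtree is an internal vertex, carrying $y$, whose children are leaves or subtrees) identifies the subtree EGF $G$ as the solution of
\[
G=y\,\psi(t+G),
\]
while the EGF of \emph{all} trees is $\psi(t+G)$, and its $\ell=0$ part (every child of the root a leaf) is $\psi(t)$. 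Thus $\xi=\Phi+\bigl(\psi(t+G)-\psi(t)\bigr)$, the $\ell=0$ contribution being carried by $\Phi$ without the $w$-factor; Taylor expansion of $\psi$ about $t$ gives $\psi(t+G)-\psi(t)=\sum_{\nu\geq 1}\psi^{(\nu)}\frac{G^\nu}{\nu!}$, which is legitimate as $G$ has no constant term.

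It then remains to identify $G$ with $\Gamma$, and this is precisely the Lagrange reversion theorem applied to $G=y\,\psi(t+G)$: the unique formal solution with $G|_{y=0}=0$ is $G=\sum_{\ell\geq 1}\frac{y^\ell}{\ell!}\bigl(\psi^\ell\bigr)^{(\ell-1)}=\Gamma$. Substituting yields the claimed formula. I expect the main obstacle to lie not in the final analytic step, which is the textbook reversion identity, but in the two combinatorial reductions: pinning down the product decomposition of $\Dc_\MS$ with the correct factor at the root versus interior vertices, and verifying that refinements localise to \emph{independent} choices at each vertex so that $N_{r,\MS}$ genuinely factors as $\prod_v w_{d_v}(z)$, together with the careful bookkeeping of the $y$-exponent (root without $y$, interior vertices with $y$) and the excision of the $\ell=0$ term in favour of $\Phi$.
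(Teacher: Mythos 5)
Your proposal is correct and takes essentially the same route as the paper: there, too, $\xi-\Phi$ is regrouped as a sum over the rooted trees underlying $\MS_1$ (leaves and leaf-edges deleted), with multiplicative vertex weights — a vertex with $\nu$ internal children contributes $y^{\nu}\psi(t,q,z)^{(\nu)}$, which silently encodes your facts (1) and (2) on the product structure of $\Dc_\MS$ and the factorization $\sum_r N_{r,\MS}z^r=\prod_v w_{d_v}(z)$ — yielding $\xi-\Phi=\sum_{\nu\geq 1}\psi^{(\nu)}\frac{(y\Gamma)^{\nu}}{\nu!}$ with $\Gamma=\sum_{[T]}\frac{{\cal Q}(T)}{\Aut(T)}$. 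The only difference is the final step: the paper evaluates this tree sum by quoting Theorem \ref{belte} (Theorem 3.3 of \cite{Gaiffigeneralizedpoincare}), whereas you re-derive precisely that identity from the functional equation $G=y\,\psi(t+G)$ via Lagrange reversion, so your closing move is a proof of the lemma the paper cites.
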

The next (sub)section will be devoted to the proof of this theorem.
\begin{example}
Here it is a computation of the first terms of the series  \(\xi(t,q,y,z)\), that is the terms whose \(t\)-degree is \(\leq 5\): 
%
$${\frac{t^5}{5!}}\left[\left(315\,q^2+1305\,q+315\right)yz^2+\left(\left(
 315\,q+315\right)y^2+\left(210\,q^2+870\,q+210\right)y\right)z
 \,+\right.
$$
$$
\left.+\,105\,y^3+\left(105\,q+105\right)\,y^2+\left(25\,q^2+95\,q+25\right)
 \,y+q^3+16\,q^2+16\,q+1\right]\,+
$$
$$
+\frac{t^4}{4!}\left[30yz(1+q)+15 y^2+10y(1+q) +q^2+5q+1\right] +\frac{t^3}{3!}\left[3y+q+1\right] +\frac{t^2}{2}+t.$$
\end{example}

If we put \(q=1\) in the formula for the Poicar\'e series \(\Phi_{super}(q,t)\), we obtain the Euler characteristic series of  the supermaximal models \(Y_{\Bc(n-1)}\). Moreover we notice that if we put \(q=-1\) we obtain the Euler characteristic series of the {\em real} supermaximal models \(Y_{\Bc(n-1)}(\R)\) constructed with base field \(\R\). 
In fact from a result of  \cite{krasnov} it follows   that
\(H^{2i}(Y_{\Bc(n-1)}, \Z_2)\cong H^{i}(Y_{\Bc(n-1)}(\R), \Z_2)\);  therefore \(\sum_{i}(-1)^i \dim\ H^{2i}(Y_{\Bc(n-1)}, \Q)\)
 is equal to the Euler characteristic  \(\chi_E (Y_{\Bc(n-1)}(\R))\). 
 
We observe that one can obtain  the Euler characteristic series in a more direct way  from the series \(\xi(t,q,y,z)\):

\begin{theorem}
\label{teo:eulerchar}
One can compute the Euler characteristic series of  the models \(Y_{\Bc(n-1)}(\R)\) from  the series \(\xi(t,q,y,z)\) 
by  substituting  \(q\) with \(-1\) and also
\begin{itemize}

\item \(y^k\) with \(-1\) if \(k\) is even, otherwise with 0; 

\item \(z^r\)  with \(E_r\), where we denote by \(E_n\)  the Euler secant number defined by
\[\frac{2}{e^t+e^{-t}}=\sum_nE_n\frac{t^n}{n!}\] (see sequence A028296 in OEIS; notice that if \(n\) is odd then \(E_n=0\)).

\end{itemize}
\end{theorem}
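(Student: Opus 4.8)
The plan is to combine Theorem~\ref{teo:sostituzioni} with the specialization $q=-1$, which by the Krasnov-type isomorphism recalled just before the statement computes the Euler characteristic in the real case. First I would record the reduction: since $H^{2i}(Y_{\Bc(n-1)},\Z_2)\cong H^{i}(Y_{\Bc(n-1)}(\R),\Z_2)$, the Euler characteristic series of the real models $Y_{\Bc(n-1)}(\R)$ is exactly $\Phi_{super}(q,t)$ evaluated at $q=-1$. By Theorem~\ref{teo:sostituzioni}, $\Phi_{super}(q,t)$ is obtained from $\xi(t,q,y,z)$ by the two monomial substitutions $y^\ell\mapsto \frac{q^\ell-q}{q-1}$ and the stated $z^r\mapsto(\cdots)$. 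Because evaluation at $q=-1$ is a ring homomorphism, and the substitutions only modify the $y$- and $z$-monomials while leaving the $t^n/n!$ and the explicit $q^k$ factors untouched, it suffices to evaluate the two substitution functions at $q=-1$ and to replace $q^k$ by $(-1)^k$; the claimed rules are then precisely these specializations. This reduces the theorem to two elementary computations.

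For the $y$-rule I would simply evaluate $\frac{q^\ell-q}{q-1}$ at $q=-1$, obtaining $\frac{(-1)^\ell+1}{-2}$, which equals $-1$ when $\ell$ is even and $0$ when $\ell$ is odd, exactly as stated.

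For the $z$-rule I would first analyze the single factor $q\,\frac{q^{m-1}-1}{q-1}$ appearing in the product of Theorem~\ref{teo:sostituzioni}, where $m=j_\theta-j_{\theta-1}\ge 1$ is a gap. Evaluating at $q=-1$ gives $\frac{(-1)^{m-1}-1}{2}$, which is $0$ when $m$ is odd and $-1$ when $m$ is even. Consequently, in the sum defining the $z^r$-substitution, only the terms all of whose gaps $j_\theta-j_{\theta-1}$ are even survive at $q=-1$, and each such term contributes $(-1)^s\,\frac{r!}{(j_1-j_0)!\cdots(j_s-j_{s-1})!}$. Writing the even gaps as $2a_1,\dots,2a_s$ with each $a_\theta\ge 1$, the specialized value of $z^r$ becomes $\sum_{s\ge0}(-1)^s\sum_{a_1+\cdots+a_s=r/2,\;a_\theta\ge1}\frac{r!}{(2a_1)!\cdots(2a_s)!}$, which is empty (hence $0$) when $r$ is odd.

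The last step, which is the only genuinely combinatorial one, is to recognize this as the Euler secant number $E_r$. I would expand $\frac{1}{\cosh t}=\frac{1}{1+(\cosh t-1)}=\sum_{s\ge0}(-1)^s(\cosh t-1)^s$, use $\cosh t-1=\sum_{k\ge1}\frac{t^{2k}}{(2k)!}$, and extract the coefficient of $\frac{t^r}{r!}$: this matches the expression above term by term. Since $\frac{1}{\cosh t}=\frac{2}{e^t+e^{-t}}=\sum_n E_n\frac{t^n}{n!}$, the specialized $z^r$-substitution equals $E_r$ (and vanishes for $r$ odd, matching $E_r=0$). I expect this identification to be the main point of the argument, although it is really just the geometric-series expansion of $1/\cosh t$; the rest of the proof is the bookkeeping verifying that setting $q=-1$ and applying the substitutions of Theorem~\ref{teo:sostituzioni} commute.
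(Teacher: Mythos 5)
Your proof is correct, and its key step takes a genuinely different route from the paper's. The bookkeeping agrees in both: specializing $q=-1$ commutes with the substitutions of Theorem~\ref{teo:sostituzioni}; the $y$-rule is just the evaluation of $\frac{q^\ell-q}{q-1}$ at $q=-1$; and in the $z^r$-substitution each factor $q\,\frac{q^{m-1}-1}{q-1}$ (with $m=j_\theta-j_{\theta-1}$) vanishes at $q=-1$ when the gap $m$ is odd and equals $-1$ when it is even, so only chains $0=j_0<j_1<\cdots<j_s=r$ with all gaps even survive, each weighted $(-1)^s\frac{r!}{j_1!(j_2-j_1)!\cdots(j_s-j_{s-1})!}$. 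Where you diverge is in evaluating this surviving alternating sum. The paper observes that the multinomial coefficient counts flags $\emptyset\subsetneq S_1\subsetneq\cdots\subsetneq S_s=\{1,\dots,r\}$ of even-cardinality subsets with prescribed cardinalities $j_\theta$, so the sum is the alternating chain count in the poset $\Pc(\{1,\dots,r\})_{even}$; it then invokes Philip Hall's theorem and Stanley's computation of the M\"obius function of this poset to conclude the value is $E_r$. You instead prove the needed identity directly by the formal geometric-series expansion $\frac{1}{\cosh t}=\sum_{s\geq 0}(-1)^s(\cosh t-1)^s$ and coefficient extraction from $(\cosh t-1)^s=\bigl(\sum_{k\geq 1}t^{2k}/(2k)!\bigr)^s$, matching your sum over compositions of $r$ into even parts term by term with the coefficient of $t^r/r!$ in $2/(e^t+e^{-t})$. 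The two are closely related---your expansion is essentially the standard derivation of the M\"obius-function fact the paper cites---but yours is self-contained and elementary, avoiding Hall's theorem and the external references, while the paper's version is shorter and places $E_r$ in its poset-topological context (reduced Euler characteristic of the order complex of the even-subset poset). You also make explicit some checks the paper leaves implicit: the $q=-1$ evaluation of the $y$-rule, the vanishing for odd $r$, and the fact that specialization and substitution commute.
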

\begin{proof}

When we find \(z^r\) in \(\xi(t,q,y,z)\), it means that we are computing the contribution to the Euler characteristic of all the  factors  
\( c_{\MS_1}^{\delta_1}c_{\MS_2}^{\delta_2}\cdots c_{\MS_k}^{\delta_k}\) of the basis  elements,   such that \(|\MS_k-\MS_1|=r\).
We observe that this contribution is non zero if and only if \(r\) is even. In this case our problem is equivalent to  computing the Euler characteristic of the order  complex   of the poset \(\Pc(\{1,...,r\})_{even}\) of the subsets of \(\{1,...,r\}\) with even cardinality.
By  Philip Hall Theorem (see for instance Proposition 1.2.6  of \cite{wachsposet})  we can compute it via the Moebius function, that  is equal to \(E_r\)  (see  Section 3.7 of Stanley's  survey   \cite{stanleysurvey}).

\end{proof}

\subsection{Proof of Theorem \ref{teo:formulapoincare}}
We will follow a strategy similar to the one in Section 3 of \cite{Gaiffigeneralizedpoincare}.

 \begin{definition}
Given any rooted oriented  tree $T$, the polynomial  ${\cal Q}( T)$ 
is a product of 
monomials that contains a factor for each vertex  of $T$: if $v$ 
is a vertex of $T$ with  $\nu $ outgoing edges, the 
corresponding factor is $y^\nu{\displaystyle \psi(t,q,z)^{(\nu )}}$. 
\end{definition}

We notice that  the cardinality  
${\displaystyle Aut (T)}$  of the automorphism group of a rooted 
tree  $T$  is equal to  the product ${\displaystyle \prod 
_{v}^{}\gamma _{sym,v}}$ where $v$ ranges over the vertices of $T$ and $\gamma _{sym,v}$ is determined in this way:  delete $v$ and consider the 
connected components of the subgraph of $T$ that stems from 
$v$. Suppose that they can be partitioned  in $k$ automorphism classes with 
the following cardinalities: $a_1,a_2,\ldots,a_k$. Then $\gamma 
_{sym,v}=a_1!a_2!\cdots a_k!$. 

Now  we  associate a tree  to every monomial \( \eta\ c_{\MS_1}^{\delta_1}c_{\MS_2}^{\delta_2}\cdots c_{\MS_k}^{\delta_k}\) in the basis of \(H^*(Y_{\Bc(n-1)})\),  by choosing the rooted tree  with \(n\) leaves associated with the nested set \(\MS_1\) (see Section 
\ref{secexamplean}) and deleting the leaves and the edges that contain a leaf.
Then \(\xi(t,q,y,z)- \Phi(q,t)\) can be computed by regrouping together all the basis monomials that are associated with the same rooted tree. This means that \(\xi(t,q,y,z)- \Phi(q,t)\) can be written  as:
  
  
\begin{displaymath}
	\sum _{n\geq 1}^{}\psi(t,q,z)^{(n)}\frac{(y\Gamma)^n}{n!}
\end{displaymath}
where
\begin{displaymath}
	\Gamma=\sum 
	_{[T] }
	\frac{{\cal Q}( T)}{Aut (T)}
\end{displaymath}
and the sum ranges over all the automorphism classes of  nonempty oriented, 
rooted trees.

\noindent Thus the problem can be reduced to the one of finding a `nice' formula for 
$\Gamma$. This is provided by the following theorem, that is a consequence of Theorem 3.3   of \cite{Gaiffigeneralizedpoincare}.

\begin{theorem}
\begin{equation}
	\sum 
	_{
	\begin{array}{c}
	[T]  	
	\end{array}
	}^{}\frac{{\cal Q}( T)}{Aut (T)}=
	\sum _{n\geq 1}^{}\frac{1}{n!}y^{n-1}\left(\psi(t,q,z) 
	^n \right)^{(n-1)}
	\label{bellaeq}
\end{equation}
where $[T]$ ranges over all the automorphism classes of 
nonempty oriented  rooted trees.
\label{belte}
\end{theorem}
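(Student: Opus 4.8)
The plan is to obtain a functional equation for $\Gamma=\sum_{[T]}{\cal Q}(T)/Aut(T)$ directly from the recursive structure of rooted trees, and then to solve it by Lagrange inversion. Throughout write $\psi=\psi(t,q,z)$, the superscript $^{(\nu)}$ denoting, as everywhere, the $\nu$-th derivative with respect to $t$.

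First I would decompose a nonempty oriented rooted tree $T$ according to its root $r$: if $r$ has out-degree $\nu$, then $T$ is determined by the unordered multiset of the $\nu$ subtrees $T_1,\dots,T_\nu$ stemming from $r$, each again a nonempty rooted tree. Since the vertex weight is multiplicative over vertices, ${\cal Q}(T)=y^\nu\psi^{(\nu)}\prod_i{\cal Q}(T_i)$, and the order of the automorphism group factorizes as $Aut(T)=\gamma_{sym,r}\prod_iAut(T_i)$, where $\gamma_{sym,r}=\prod_j a_j!$ records the multiplicities $a_1,\dots,a_k$ (with $\sum_j a_j=\nu$) of the isomorphism classes occurring among $T_1,\dots,T_\nu$. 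Hence, for fixed root out-degree $\nu$, summing ${\cal Q}(T)/Aut(T)$ over all such trees amounts to summing the symmetric-power weight $\frac{1}{\prod_j a_j!}\prod_j w_{[S_j]}^{a_j}$ over all multisets of subtrees, where $w_{[S]}={\cal Q}(S)/Aut(S)$ so that $\Gamma=\sum_{[S]}w_{[S]}$; by the multinomial theorem this equals $\Gamma^\nu/\nu!$. Summing over $\nu\geq 0$ (the term $\nu=0$ being the one-vertex tree, of weight $\psi$) gives
\[
\Gamma=\sum_{\nu\geq 0}\frac{y^\nu\psi^{(\nu)}}{\nu!}\,\Gamma^\nu=\sum_{\nu\geq 0}\frac{(y\Gamma)^\nu}{\nu!}\,\psi^{(\nu)}.
\]

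The key observation is then that the right-hand side is a Taylor expansion: for any formal quantity $h$ one has $\sum_{\nu\geq 0}\frac{h^\nu}{\nu!}\psi^{(\nu)}(t)=\psi(t+h)$, so with $h=y\Gamma$ the functional equation collapses to the compact form $\Gamma=\psi(t+y\Gamma)$. Since $\psi$ has no constant term in $t$, putting $w=y\Gamma$ gives $w=y\,\phi(w)$ with $\phi(w):=\psi(t+w)$ and $\phi(0)=\psi(t)\neq 0$; everything is a well-defined formal power series in $y$ whose coefficients are formal power series in $t$ (the coefficient of $y^{n-1}$ beginning in $t$-degree $n+1$), so there are no convergence issues. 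Applying Lagrange inversion in the variable $y$ (treating $t$ as a parameter) to $w=y\phi(w)$ yields $w=\sum_{n\geq 1}\frac{y^n}{n!}\bigl[\tfrac{d^{n-1}}{dw^{n-1}}\phi(w)^n\bigr]_{w=0}$, and since $\bigl[\tfrac{d^{n-1}}{dw^{n-1}}\psi(t+w)^n\bigr]_{w=0}=\tfrac{d^{n-1}}{dt^{n-1}}\psi^n=(\psi^n)^{(n-1)}$, dividing by $y$ produces exactly
\[
\Gamma=\sum_{n\geq 1}\frac{1}{n!}\,y^{n-1}\bigl(\psi^n\bigr)^{(n-1)},
\]
which is (\ref{bellaeq}).

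The only delicate point is the combinatorial bookkeeping in the first step: one must verify that $Aut(T)=\prod_v\gamma_{sym,v}$ factorizes across the root as claimed and that the multiset sum over the children subtrees really produces the symmetric-power factor $\Gamma^\nu/\nu!$ — this is where the order of the automorphism group and the $1/\prod_j a_j!$ symmetry weights must be matched precisely. Once the functional equation $\Gamma=\psi(t+y\Gamma)$ is established, the Taylor rewriting and the Lagrange inversion are routine. Alternatively, one may identify the trees considered here with those of \cite[Theorem 3.3]{Gaiffigeneralizedpoincare} and quote that statement directly, which is the route suggested in the paper.
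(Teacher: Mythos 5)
Your proposal is correct, and it is genuinely more than what the paper itself supplies: in the paper, Theorem \ref{belte} is not proved at all, but is simply quoted as a consequence of Theorem 3.3 of \cite{Gaiffigeneralizedpoincare}, which establishes the same Abel-type identity for a generic formal series in place of \(\psi(t,q,z)\). Your route --- decomposing a rooted tree at its root into an unordered multiset of subtrees, matching \(Aut(T)=\gamma_{sym,r}\prod_i Aut(T_i)\) against the weights \(1/\prod_j a_j!\) to produce the symmetric-power factor \(\Gamma^\nu/\nu!\), collapsing the resulting fixed-point equation \(\Gamma=\sum_{\nu\geq 0}\frac{(y\Gamma)^\nu}{\nu!}\psi^{(\nu)}\) by formal Taylor expansion to \(\Gamma=\psi(t+y\Gamma)\), and then applying Lagrange inversion in \(y\) --- is a clean, self-contained derivation; note that the combinatorial half of your argument is essentially the same regrouping that the paper performs just before stating Theorem \ref{belte}, when it writes \(\xi-\Phi\) as \(\sum_{n\geq 1}\psi^{(n)}(y\Gamma)^n/n!\), so the genuinely new content you supply is the analytic half (functional equation plus inversion). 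Two points deserve a word of care, neither of which is a real gap. First, you invoke Lagrange inversion with the remark \(\phi(0)=\psi(t)\neq 0\); the classical hypothesis is that \(\phi(0)\) be \emph{invertible} in the coefficient ring, and \(\psi(t)\), having no constant term in \(t\), is not a unit of \(\Q[q,z][[t]]\). This is harmless: the equation \(w=y\phi(w)\) has a unique solution in \(yR[[y]]\) by \(y\)-adic iteration for arbitrary \(\phi\), and the identity \(n[y^n]w=[w^{n-1}]\phi(w)^n\) is a polynomial identity in the coefficients of \(\phi\), hence extends from the invertible case to all \(\phi\) --- alternatively, your degree count (the coefficient of \(y^{n-1}\) begins in \(t\)-degree \(n+1\), since \(\psi^n\) begins at \(t^{2n}\)) already shows everything is a well-defined bigraded series. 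Second, the \(y\)-adic summability of \(\sum_\nu y^\nu\psi^{(\nu)}\Gamma^\nu/\nu!\) and the legitimacy of the Taylor collapse rest on \(y\Gamma\) having positive \(y\)-order, which you use implicitly and which holds since every edge of \(T\) contributes a factor \(y\). In short: what your argument buys over the paper's citation is self-containedness and a structural explanation (the equation \(\Gamma=\psi(t+y\Gamma)\)) for why the derivative formula \(\frac{1}{n!}y^{n-1}(\psi^n)^{(n-1)}\) appears; what the citation buys is brevity and the statement in the generality of an arbitrary series.
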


\section{A combinatorial extended \(S_{n+k}\) action on the poset of the boundary strata of   $Y_{\Fc_{A_{n-1}}}$} \label{sec:combinatorialaction}
This section and the next one are devoted to point out that another hidden  extended action of the symmetric group appears in the geometry of   the minimal models  $Y_{\Fc_{A_{n-1}}}$. This action   is different from the  one,  described in Section \ref{sec:action1},  that motivated the construction of supemaximal models.
More precisely, we are going to deal with a purely combinatorial action  on the poset  \(\Bc(n-1)\) that indexes  the strata of $Y_{\Fc_{A_{n-1}}}$.
This does not correspond to an action on the variety $Y_{\Fc_{A_{n-1}}}$, but it gives rise, as we will see, to an interesting permutation action on the monomials of the Yuzvinski basis of  $H^*(Y_{\Fc_{A_{n-1}}}, \Z)$.

Let us denote by \(F^k(\Bc(n-1))\) the subset of \(\Bc(n-1)\) made by the elements of cardinality \(k+1\).
These elements indicize the \(k\)-codimensional strata of $Y_{\Fc_{A_{n-1}}}$ (the only element in \(F^0(\Bc(n-1))\) is \(\{V\}\) that  corresponds to the big open part).
In \cite{gaifficayley} it has been described an explicit bijection between \(F^k(\Bc(n-1))\) and the set of unordered partitions of \(\{1,2,...,n+k\}\) into \(k+1\) parts of cardinality greater than or equal to 2.
To recall this bijection, we identify the elements of \(\Fc_{A_{n-1}}\) with subsets of \(\{1,2,...n\}\), as in Section \ref{secexamplean}.

\begin{definition}
\label{ordering}
We fix the following  (strict) partial ordering on  \(\Fc_{A_{n-1}}\): given  \(I\) and \(J\) in \(\Fc_{A_{n-1}}\)  we put \(I< J\)  if the minimal number  in \(I\) is less than the minimal number  in \(J\).
\end{definition}

Let us  consider a nested set   \(\MS\)   that belongs to \(F^k(\Bc(n-1))\). 
 It can be represented by an oriented  rooted tree on \(n\) leaves as in Section \ref{secexamplean}. The leaves are the sets \(\{1\},\{2\},\cdots,\{n\}\).
 Now we put labels on the vertices of this tree. We start by labelling  the vertices \(\{1\},\{2\},\cdots,\{n\}\) respectively by the labels \(1,2,...,n\). 

Then we can partition the set of vertices of the tree into  levels
with the same criterion as in the proof of Theorem
\ref{teo:chiusoazionebuilding}:  level $0$ is made by the leaves, and in general,   level \(j\)  is made by the vertices \(v\) such that the maximal length of an oriented  path that connects \(v\) to a leaf is \(j\). 

Now  we  label the internal vertices of the tree in the following way. Let us suppose that there are \(q\) vertices in level 1. These  vertices correspond, by the nested property, to pairwise disjoint  elements of  \(\Fc_{A_{n-1}}\), therefore   we can totally order them using the ordering of Definition \ref{ordering} and  we  label them with  the numbers from \(n+1\) to \(n+q\) (the label  \(n+1\)  goes to the minimum, while \(n+q\) goes to the maximum).

At the same way, if there are   \(t\) vertices in level 2, we can label them with the numbers from \(n+q+1\) to \(n+q+t\) , and so on. At the end of the process, the root is labelled with the number \(n+k+1\).

We can now  associate to such a tree an unordered  partition of \(\{1,2,...,n+k\}\) into \(k+1\) parts  by assigning  to every internal vertex \(v\) the set of the labels of the vertices covered by \(v\) (see Figure \ref{labellednestedtree}).

 \begin{figure}[ht]

 \center
\includegraphics[scale=0.5]{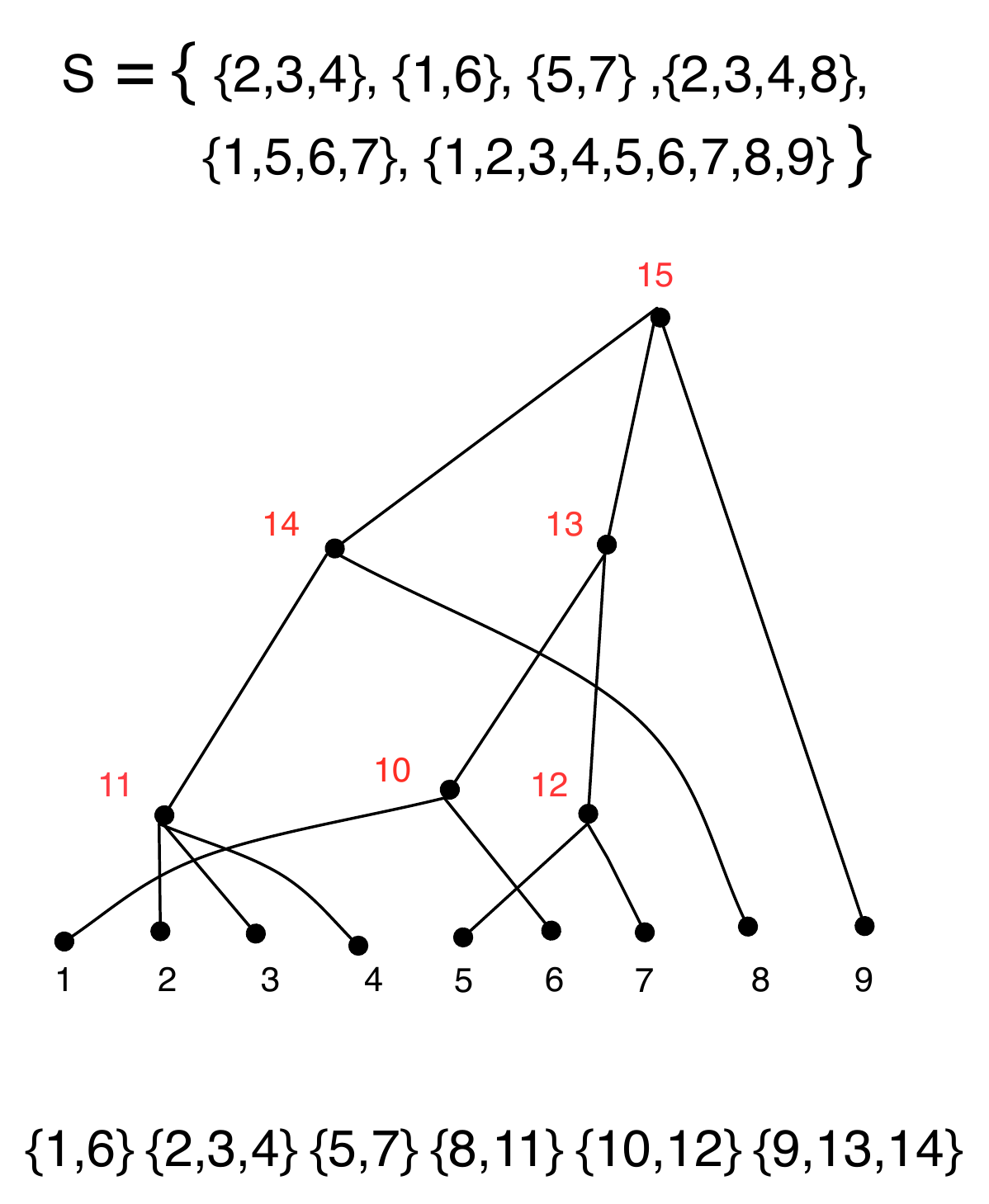}
\caption{On top of the picture there is a  nested set \(S\) with 6 elements  in \(\Fc_{A_{8}}\).  In the middle there is  its representation by an oriented labelled rooted tree. At the bottom one can read  the resulting partition  of \(\{1,2,...,14\}\) into 6 parts.}
 \label{labellednestedtree}
\end{figure}

This bijection allows us to consider new actions of the symmetric group on \(\Bc(n-1)\): every subset \(F^k(\Bc(n-1))\) is equipped with  an action of \(S_{n+k}\). 
\begin{remark}\label{rem:comparing}
We notice that when \(k>2\), if we first embed \(S_n\) into \(S_{n+k}\) in the standard way and then restrict  the  \(S_{n+k}\) action to \(S_n\) we do not obtain the natural \(S_n\) action on \(\Bc(n-1)\). For instance, let us consider \(n=5\) and \(k=3\), and the following nested set \(\MS\) in \(\Bc(4)\):
\[\MS=\{\{1,2\}, \{3,4\}, \{3,4,5\}, \{1,2,3,4,5\}\}\]
On one hand, the  natural action of the transposition \((1,3)\) sends \(\MS\) to 
\[\MS'= \{\{2,3\}, \{1,4\}, \{1,4,5\}, \{1,2,3,4,5\}\}\]
On the other hand,  the partition of \(\{1,2,3,..,8\}\) associated with \(\MS\) is 
\[\{1,2\}, \{3,4\}, \{5,7\}, \{6,8\}  \]
that is sent by the transposition \((1,3)\) to 
\( \{2,3\}, \{1,4\}, \{5,7\}, \{6,8\}.  \)

This last partition corresponds to the nested set 
\[\MS''= \{\{1,4\}, \{2,3\}, \{2,3,5\}, \{1,2,3,4,5\}\}\]
and we notice that \(\MS'\neq \MS''\).


Moreover, we observe that the  natural \(S_5\) action  on \(F^3(\Bc(4))\) and the \(S_5\) action  restricted from \(S_8\)  differ in the number of orbits, therefore   when we consider the associated permutation representations they differ in the multiplicity of the trivial representation, which is 3 for the natural \(S_5\) representation and 4 for the  restricted one.  \end{remark}

\section{The  \(S_{n+k}\) action on the  basis of $H^*(Y_{\Fc_{A_{n-1}}}, \Z)$} \label{ss:action3}

As we observed in the preceding section, the combinatorial action of \(S_{n+k}\) on  \(F^k(\Bc(n-1))\) can  be read as an action on the \(k\)-codimensional strata of $Y_{\Fc_{A_{n-1}}}$. 
Moreover we notice  that this action can in turn be extended to the Yuzvinski basis of $H^*(Y_{\Fc_{A_{n-1}}}, \Z)$ described in Section \ref{constru}.
In fact we can represent the elements of the Yuzvinski basis by {\em labelled partitions} in the way illustrated by the following example.
\begin{example}
\label{esempiobasi}
Let \(n=7\) and let us consider the monomial \(c_{A_1}^2c_{A_2}\) in the Yuzvinski basis of  \(H^6(Y_{\Fc_{A_{6}}}, \Z)\), where \(\{A_1, A_2\}\) is the nested set given by the subspaces \(A_1=\{1,2,3,5\}\), \(A_2=\{4,6,7\}\). Since  \(V\) does not belong to this nested set, we write this monomial as \(c_{A_1}^2c_{A_2}c_V^0\). Now,  according to  the  bijection described in Section \ref{sec:combinatorialaction}, we can associate to the nested set \(\{V, A_1, A_2\}\) the following partition of the set \(\{1,2,..,9\}\):
\[\{1,2,3,5\}\{4,6,7\}\{8,9\}\]
where,  \(A_1\) corresponds to  \(\{1,2,3,5\}\), \(A_2\) corresponds to \(\{4,6,7\}\) and \(V\) corresponds to \(\{8,9\}\).
Finally  we associate to \(c_{A_1}^2c_{A_2}\) the following {\em labelled partition} of \(\{1,2,..,9\}\):
\[\{1,2,3,5\}^2\{4,6,7\}^1\{8,9\}^0\]
As another example,  we represent the monomial \(c_{A_1}^2c_{V}^2\) of \(H^8(Y_{\Fc_{A_{6}}}, \Z)\) by the labelled partition of \(\{1,2,..,8\}\):
\[\{1,2,3,5\}^2\{4,6,7,8\}^2\]
\end{example}
We notice that this representation provides  us with an easy way to `read' the bounds  
 for the exponents in the Yuzvinski basis (see the end of Section \ref{constru}). More in detail, the bounds \(d_{(supp\, f)_A,A}^{\{V\}}\) can be translated in this language in the following way. Let \(I\) be a part of  a  labelled partition of \(\{1,....,n+k\}\)  that represents a monomial in the Yuzvinski basis: then  the   exponent (i.e. the label)  \(\alpha_I\) of   \(I\)  satisfies \(0\leq \alpha_I \leq |I|-2\). 
 Moreover, it  may be equal to 0 only if \(I\) contains the number \(n+k\), i.e. when \(I\) represents \(V\), and in the monomial the variable \(c_V\) does not appear (that is, according to the convention established before, it  appears with exponent 0). In particular all the sets in the partition have cardinality \(\geq 3\) except possibly for the set containing \(n+k\), that may have cardinality equal to 2.
 
Now we observe that  \(S_{n+k}\) acts on the labelled partitions of \(\{1,....,n+k\}\) into \(k+1\) parts, and this provides us with a permutation  action on the monomials of the Yuzvinski basis of $H^*(Y_{\Fc_{A_{n-1}}}, \Z)$.
More in detail:  
\begin{itemize}
\item \(S_{n+k}\) acts on the set of all the monomials that are represented by a labelled partition of \(\{1,....,n+k\}\) into \(k+1\) parts  with  all the labels \(> 0\).  
\item \(S_{n+k-1}\) acts on the set of all the monomials that are represented by a labelled partition of \(\{1,....,n+k\}\) into \(k+1\) parts with  one of  the labels  equal to  \(  0\). In fact if  there is a part labelled by 0, it must contain the number \(n+k\), and \(S_{n+k-1}\) is embedded into  \(S_{n+k}\) as the subgroup that keeps \(n+k\) fixed.
\end{itemize}

This representation, once restricted in the standard way to $S_n$, is not isomorphic to the natural $S_n$ 
representation\footnote{One can see this for instance by counting the multiplicity of the trivial representation in  \(H^6(Y_{A_7})\). The key point is provided by the monomials of type \(c_{A_1}^1c_{A_2}^1c_{A_3}^1c_V^0\) that span an invariant subspace \(H\) for both the natural \(S_8\) action and the extended \(S_{10}\) action.  By an argument similar to that of Remark \ref{rem:comparing}, 
i.e. by counting the number of orbits, one can check that on \(H\) the natural \(S_8\) representation and the \(S_8\) representation restricted from \(S_{10}\) differ  in the  the multiplicity of the trivial representation (that is respectively 3 and 4). }.
Therefore it  is also not compatible with  the action on cohomology produced by the extended geometric \(S_{n+1}\) action described in Section \ref{sec:action1}.
Nevertheless it is interesting since it  splits  the  cohomology module  into a sum of   induced representations of the form  \(Ind_G^{S_{n+k}}Id\) or \(Ind_G^{S_{n+k-1}}Id\)  where \(G\) is the stabilizer of a partition of \(\{1,....,n+k\}\).

Moreover, the  orbits of this  action can  be used to write  a generating formula  for the Poincar\'e polynomials of the models \(Y_{\Fc_{A_{n-1}}}\) that is different from the recursive formula for the Poincar\'e series recalled at the beginning of Section \ref{sec:poincaresupermodels}.

Let us denote by \(\Psi(q,t,z)\)  the following exponential generating series:
\[\Psi(q,t,z)= 1+ \sum_{n\geq 2, \; \MS\in \Nc(\Fc_{A_{n-1}})}P(S) z^{|\MS|} \frac{t^{n+|\MS|-1}}{(n+|\MS|-1)!}\]
where, for every \(n\geq 2\), 
\begin{itemize}
\item  \(\MS\) ranges over all the nested sets of the building set \(\Fc_{A_{n-1}}\) (i.e., \(\MS\) may not contain \(\{V\}\));  
\item  \(P(\MS)\) is the polynomial, in the variable \(q\),  that expresses the contribution to $H^*(Y_{\Fc_{A_{n-1}}}, \Z)$ provided by all the monomials \(m_f\) in the Yuzvinski basis such that \(supp \ f=\MS\). 
For instance, with reference to the Example \ref{esempiobasi}, if \(\MS\) is the nested set \(\{A_1,A_2\}\), then \(P(\MS)=(q+q^2)q\) since we have to take into account all the possible ways to label the partition
\[ \{1,2,3,5\}\{4,6,7\}\{8,9\},\]
while if \(\MS\) is \(\{A_1, V\}\) then \(P(\MS)=(q+q^2)^2\) since we are dealing with the possible labellings of  the partition
\[\{1,2,3,5\}\{4,6,7,8\}.\]
\end{itemize}
We observe that the series \(\Psi(q,t,z)\) encodes the same information that is encoded by the Poincar\'e series. In particular, for a fixed \(n\), the Poincar\'e polynomial  of the model \(Y_{\Fc_{A_{n-1}}}\) can be read from the coefficients of the  monomials whose \(z,t\) component is \(t^kz^s\) with \(k-s=n-1\) (see the Example \ref{esempiopolpoincare} at the end of this section).
 \begin{theorem}
 \label{teo:formulapoicareminimale}
 We have the following formula for the series \(\Psi(q,t,z)\): 
 \begin{equation}
 \label{formulapoicareminimale} \Psi(q,t,z) = e^t\prod_{i\geq 3}e^{zq[i-2]_q\frac{t^i}{i!}}  
 \end{equation}
 where \([j]_q\) denotes the \(q\)-analog of \(j\): \([j]_q=1+q+\cdots+q^{j-1}\).

 \end{theorem}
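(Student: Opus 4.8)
The plan is to evaluate the series $\Psi(q,t,z)$ entirely on the combinatorial side, by first making the polynomials $P(\MS)$ explicit and then recognizing the whole sum as an exponential generating function in $t$, with $z$ marking blocks and $q$ marking cohomological degree.

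First I would make $P(\MS)$ explicit. I represent the nested set $\MS$ together with $V$ as the rooted tree $\hat\MS=\MS\cup\{V\}$ on $n$ leaves as in Section \ref{secexamplean}, so that the elements of $\hat\MS$ are exactly its internal vertices. For an internal vertex $A$ with $c_A$ children, a short dimension count (using $\dim A=|A|-1$ and the fact that the subspaces attached to the children of $A$ lie in direct sum inside $A$) shows that the Yuzvinsky bound of Section \ref{constru} is $d^{\{V\}}_{(\supp f)_A,A}=c_A-1$. Hence the exponent $f(A)$ of a basis monomial with $\supp f=\MS$ ranges over $1\le f(A)\le c_A-2$, and since $c_A$ contributes $q^{f(A)}$ to the degree, I obtain $P(\MS)=\prod_{A\in\MS}\bigl(q+q^2+\cdots+q^{c_A-2}\bigr)=\prod_{A\in\MS}q\,[c_A-2]_q$ (in particular $P(\MS)=0$ as soon as some internal vertex has only two children).

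Next I would flatten the tree using the bijection of \cite{gaifficayley} recalled in Section \ref{sec:combinatorialaction}: a Yuzvinsky monomial with support $\MS$ on $n$ leaves corresponds to a labelled partition of $\{1,\ldots,n+k\}$, with $k=|\hat\MS|-1$, into parts of size $\ge 2$, one part per internal vertex, whose size is its number of children and whose label is the exponent $f(A)$, subject to the rule that only the part containing the maximum $n+k$ (the part of $V$) may carry the label $0$. Under this dictionary the monomials split into two families: those with $V\in\supp f$ (the $V$-part is labelled $\ge 1$, so all parts have size $\ge 3$) and those with $V\notin\supp f$ (the $V$-part is labelled $0$ and may have size $2$). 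I then read $\Psi-1$ as an EGF over such decorated partitions, weighting each part of size $i$ and label $\ge 1$ by $q\,[i-2]_q$, marking by $z$ the number of parts labelled $\ge 1$, and noting that the prescribed degree $n+|\MS|-1$ equals the ground-set size $N=n+k$ when $V\in\supp f$ and equals $N-1$ when $V\notin\supp f$.

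The computation is then a double application of the exponential formula. Setting $g(t)=\sum_{i\ge 3}q\,[i-2]_q\,\frac{t^i}{i!}$, a set of parts of size $\ge 3$ each labelled $\ge 1$ has EGF $\exp(z\,g(t))$. The family $V\in\supp f$ carries the standard grading ($t$-degree $=N$) and all its blocks are weighted uniformly, so it contributes $\exp(z\,g(t))-1$. For the family $V\notin\supp f$ the $V$-part is a label-$0$ block of size $\ge 2$ containing the maximum; singling out the block containing the largest label has the effect of differentiating, so if $D(t)=\sum_{j\ge 2}\frac{t^j}{j!}=e^t-1-t$ is the EGF of such a block, this family contributes $D'(t)\exp(z\,g(t))=(e^t-1)\exp(z\,g(t))$. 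Adding the two families gives $\Psi-1=\bigl(\exp(z\,g)-1\bigr)+(e^t-1)\exp(z\,g)=e^t\exp(z\,g(t))-1$, which is the claimed formula \eqref{formulapoicareminimale}.

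The delicate point, and the step I would write out most carefully, is exactly this bookkeeping for the block containing the largest label: one must verify that assigning that block the special weight $D$ while the remaining blocks are governed by $\exp(z\,g)$ produces the EGF $\int_0^t D'(s)\exp(z\,g(s))\,ds$, and that the shift by one built into the exponent $n+|\MS|-1$ cancels the integration, so that the contribution is the plain product $(e^t-1)\exp(z\,g)$ rather than its antiderivative. The other ingredients — the identification $d^{\{V\}}_{(\supp f)_A,A}=c_A-1$ and the flattening bijection — are routine given the results already recalled.
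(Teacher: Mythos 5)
Your proposal is correct and follows essentially the same route as the paper's own proof: the paper likewise converts the Yuzvinsky monomials into labelled partitions via the bijection of \cite{gaifficayley}, weights each part of size $i\geq 3$ with nonzero label by $zq[i-2]_q\frac{t^i}{i!}$ and exponentiates, and handles the unique label-$0$ part by assigning it $\frac{t^{i-1}}{(i-1)!}$, which is exactly your shifted-grading/derivative bookkeeping for the block containing the maximal element $n+k$. The two points you elaborate --- the dimension count giving $d^{\{V\}}_{(\supp f)_A,A}=c_A-1$, hence $P(\MS)=\prod_A q\,[c_A-2]_q$, and the verification that the exponent shift in $n+|\MS|-1$ turns $\int_0^t D'(s)e^{zg(s)}\,ds$ into the plain product $(e^t-1)e^{zg(t)}$ --- are stated only implicitly or tersely in the paper, so your write-up is a more careful rendering of the same argument rather than a different one.
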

 \begin{proof}
 We think of the monomials of the Yuzvinsky bases as labelled partitions. Then  we single out the contribution given to \(\Psi\) by all the parts    represented by subsets with cardinality \(i\geq 3\) and with non trivial label. 
  If in a partition there is only one such part  its contribution is \(z(q+q^2+\cdots+ q^{i-2})\frac{t^i}{i!}\), if there are \(j\) such parts their contribution is \(z^j(q+q^2+\cdots+ q^{i-2})^j \frac{(\frac{t^i}{i!})^j}{j!}\). 
  In conclusion the contribution  of all the parts    represented by subsets with cardinality \(i\geq 3\) and with non trivial label is provided by
 \[e^{z(q+q^2+\cdots+ q^{i-2})\frac{t^i}{i!}}-1\]

 Let us now focus on the contribution to \(\Psi\) that comes  from the parts with cardinality \(i\geq 2\) and with label equal to 0. For every monomial in the basis there is  at most one such  part, and its contribution is \(\frac{t^{i-1}}{(i-1)!}\).  The exponent \(i-1\) (instead of  \(i\)) takes into account that such part does not contribute to the cardinality \(|\MS|\).  
The total contribution of the elements with label equal to 0 is therefore     \(\sum_{i\geq 2}\frac{t^{i-1}}{(i-1)!}\). Summing up, we observe that the expression  
  \[ e^t\prod_{i\geq 3}e^{zq[i-2]_q\frac{t^i}{i!}}  \]
  allows us to take into account the contribution to \(\Psi\) of all the possible monomials in the Yuzvinski bases.
   \end{proof}

  \begin{example}
 \label{esempiopolpoincare}
 If one wants to compute the Poincar\'e polynomial of \(Y_{\Fc_{A_{4}}}\) one has to single out all the monomials in \(\Psi\) whose  \(z,t\) component is \(t^kz^s\) with \(k-s=4\).
 A product of the exponential functions that appear in the formula (\ref{formulapoicareminimale}) gives:
 \[\frac{t^4}{4!} [1]+ \frac{t^5}{5!}z [16q+6q^2+q^3]+ \frac{t^6}{6!}z^2 [10q^2]\]
Therefore the  Poincar\'e polynomial is \(1+16q+16q^2+q^3\).
 
 \end{example}
 


\addcontentsline{toc}{section}{References}
\bibliographystyle{amsbib} 
\bibliography{Bibliogpre} 
\end{document}